\newtheorem{theorem}{Theorem}[section]
\newtheorem{lemma}[theorem]{Lemma}
\newtheorem{corollary}[theorem]{Corollary}
\theoremstyle{definition}
\newtheorem{definition}[theorem]{Definition}
\newtheorem{definitions and remarks}[theorem]{Definitions and Remarks}
\theoremstyle{remark}
\newtheorem{remark}[theorem]{Remark}
\newtheorem{remarks}[theorem]{Remarks}
\numberwithin{equation}{section}
\newcommand{\inv}{\mathrm{inv}}
\newcommand{\supp}{\mathrm{supp}\,}
\newcommand{\mon}{\mathrm{mon}\,}
\newcommand{\lex}{\mathrm{lex}}
\newcommand{\Rel}{\mathrm{Rel}}
\newcommand{\al}{{\alpha}}
\newcommand{\be}{{\beta}}
\newcommand{\De}{{\Delta}}
\newcommand{\ga}{{\gamma}}
\newcommand{\la}{{\lambda}}
\newcommand{\Om}{{\Omega}}
\newcommand{\p}{{\partial}}
\newcommand{\Th}{{\Theta}}
\newcommand{\IN}{{\mathbb N}}
\newcommand{\IR}{{\mathbb R}}
\newcommand{\IK}{{\mathbb K}}
\newcommand{\cC}{{\mathcal C}}
\newcommand{\cD}{{\mathcal D}}
\newcommand{\cF}{{\mathcal F}}
\newcommand{\cI}{{\mathcal I}}
\newcommand{\cN}{{\mathcal N}}
\newcommand{\cO}{{\mathcal O}}
\newcommand{\cQ}{{\mathcal Q}}
\newcommand{\cV}{{\mathcal V}}
\newcommand{\uk}{\underline{k}}
\newcommand{\um}{\underline{m}}
\newcommand{\hf}{{\hat f}}
\newcommand{\hPhi}{{\widehat \Phi}}
\newcommand{\llb}{{[\![}}
\newcommand{\rrb}{{]\!]}}
\newcommand{\RN}[1]{%
  \textup{\uppercase\expandafter{\romannumeral#1}}%
}
\begin{document}
\title[Malgrange division by quasianalytic functions]
{Malgrange division by quasianalytic functions}

\author[E.~Bierstone]{Edward Bierstone}
\author[P.D.~Milman]{Pierre D. Milman}
\address{University of Toronto, Department of Mathematics, 40 St. George Street,
Toronto, ON, Canada M5S 2E4}
\email[E.~Bierstone]{bierston@math.toronto.edu}
\email[P.D.~Milman]{milman@math.toronto.edu}
\thanks{Research supported in part by NSERC grants OGP0009070 and OGP0008949}

\subjclass{Primary 03C64, 26E10, 32S45; Secondary 30D60, 32B20}

\keywords{quasianalytic, Denjoy-Carleman class, Malgrange division, formal division algorithm,
diagram of initial exponents, formal relations} 

\begin{abstract}
Quasianalytic classes are classes of $\cC^\infty$ functions that satisfy
the analytic continuation property enjoyed by analytic functions. Two general examples are quasianalytic
Denjoy-Carleman classes (of origin in the analysis of linear partial differential equations) and the class
of $\cC^\infty$ functions that are definable in a polynomially bounded $o$-minimal structure (of origin
in model theory). We prove a generalization to quasianalytic functions of Malgrange's celebrated theorem 
on the division of $\cC^\infty$ by real-analytic functions.
\end{abstract}

\date{\today}
\maketitle
\setcounter{tocdepth}{1}
\tableofcontents

\section{Introduction}\label{sec:intro} 
%Throughout this article, we work with a given quasianalytic class $\cQ$ of $\cC^\infty$ functions. 
A quasianalytic class $\cQ$ associates, to every open
subset $U \subset \IR^n$, a subring $\cQ(U)$ of $\cC^\infty(U)$ which satisfies the basic properties
of $\cC^\infty$ functions together with the following axiom of \emph{quasianalyticity}: if the Taylor
expansion $\hf_a$ of $f\in \cC^\infty(U)$ at a point $a \in U$ vanishes identically, then
$f$ is identically zero in a neighbourhood of $a$ (see Section \ref{sec:quasian} for precise definitions.)
An important special case is $\cQ = \cO$, the class of real-analytic functions, and quasianalyticity is
a generalization of the classical property of analytic continuation. 

Two general examples of quasianalytic classes are Denjoy-Carleman classes (see \S\ref{subsec:DC}), 
and the class of $\cC^\infty$ functions that are definable in a given polynomially bounded $o$-minimal 
structure \cite{Mil}, \cite{RSW}. The former arise in the analysis of linear partial differential equations, 
and the latter in model theory.

\begin{theorem}\label{thm:main} Let $\Phi_1,\ldots,\Phi_q \in \cQ(U)^p$, where $U$ is open in $\IR^n$
and $p\in \IN$. Given $f \in \cC^\infty(U)^p$, we can find $g_1,\ldots,g_q \in \cC^\infty(U)$
such that
\begin{equation}\label{eq:main}
f = \sum_{j=1}^q g_j \Phi_j,
\end{equation} 
if and only if the equation \eqref{eq:main} admits a formal power series solution at every point $a\in U$; i.e.,
$\hf_a = \sum_{j=1}^q G_{j,a} \hPhi_{j,a}$, where each $G_{j,a} \in \cF_a$, $a\in U$. ($\cF_a$ denotes the ring
of formal power series centred at $a$.)
\end{theorem}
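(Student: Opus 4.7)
\bigskip
\noindent\textbf{Proof proposal.} The necessity (``only if'') direction is immediate: given a $\cC^\infty$ solution $f = \sum g_j \Phi_j$, the Taylor expansion at any $a \in U$ yields $\hf_a = \sum \hg_{j,a} \hPhi_{j,a}$, so each $G_{j,a} := \hg_{j,a} \in \cF_a$ works. The content of the theorem is the converse, for which my plan is to construct the $\cC^\infty$ coefficients via the Whitney extension theorem applied to a family of \emph{canonically chosen} formal solutions $\{(G_{1,a},\ldots,G_{q,a})\}_{a\in U}$.

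First, by a partition of unity argument, I would reduce to working on a relatively compact open set. The core construction is a \emph{formal division algorithm} for power series by the submodule $\hPhi_a \cdot \cF_a^q \subset \cF_a^p$. Following Hironaka and the authors' earlier work, attach to each $a$ the \emph{diagram of initial exponents} of the module of formal relations; this produces canonical quotients $G_{j,a}$ and a canonical remainder, with the remainder equal to zero precisely because a formal solution exists by hypothesis. The diagram of initial exponents is upper semi-continuous in $a$, which stratifies $U$ into locally closed quasianalytic subsets on which the diagram is constant and the division data varies in a controlled, quasianalytic way.

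The next step is to upgrade the pointwise formal solutions $\{G_{j,a}\}$ to a Whitney field of class $\cC^\infty$. This requires uniform control on the coefficients of $G_{j,a}$ as $a$ varies, which is supplied by a \emph{\L{}ojasiewicz-type inequality} for the quasianalytic class $\cQ$: if the Taylor expansion of a quasianalytic function lies in a given ideal to high order at a point, then the function itself vanishes to a related order along the zero locus. For Denjoy--Carleman and o-minimal classes this should be accessible either directly or through resolution of singularities in the category $\cQ$, which allows reduction to the monomial/normal-crossings case where the diagram stabilizes and formal division becomes explicit. Once the Whitney field property is established, the Whitney extension theorem yields the desired $g_j \in \cC^\infty(U)$.

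The main obstacle I anticipate is the uniform regularity of the canonical formal solutions across the stratification of $U$, i.e.\ proving that the family $\{(G_{1,a},\ldots,G_{q,a})\}_{a\in U}$ really does satisfy the Whitney compatibility estimates. This is where the interaction between the combinatorics of the diagram of initial exponents and the analytic \L{}ojasiewicz estimate becomes delicate: one must track how the bounds on the quotients deteriorate as $a$ crosses from a stratum where the diagram is small to one where it jumps, and compensate by using the order of vanishing of $\Phi$ along the lower stratum. I expect the bulk of the paper to be occupied by setting up semicoherent sheaves of formal relations in the quasianalytic setting and proving the requisite \L{}ojasiewicz-type inequality, reducing via resolution of singularities to a monomial situation in which Whitney regularity can be checked directly.
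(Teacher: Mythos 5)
Your toolbox is the right one---Hironaka formal division, diagrams of initial exponents, stratification by semicontinuity, Borel/Whitney extension, {\L}ojasiewicz inequalities---but the proposal has a genuine gap exactly where you flag the ``main obstacle,'' and the paper resolves that obstacle by a mechanism you have not found rather than by the delicate estimate you anticipate.

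First, a structural point: the paper does not build a single Whitney field on all of $U$ and then extend. It induces over a local filtration $V = X_0 \supset X_1 \supset \cdots \supset X_{t+1} = \emptyset$ by closed $\cQ$-sets with smooth differences, reducing to the following step: if $f$ is already flat on $X_{k+1}$, produce $g_1,\ldots,g_q \in \cC^\infty$ with $f - \sum g_j\Phi_j$ flat on $X_k$. This recasts the problem so that one only ever needs (i) a Borel-type extension from a \emph{single smooth stratum} $S = X_k \setminus X_{k+1}$, and (ii) a way to make the canonical quotient coefficients extend smoothly (and flatly) across $X_{k+1}$. For (ii) the paper uses a multiplier lemma and an l'H\^opital--Hestenes lemma, both resting on resolution of singularities and {\L}ojasiewicz inequalities---that is where {\L}ojasiewicz actually enters, not in controlling Whitney estimates between strata as you suggest.

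Second, and this is the real missing idea: you worry about ``tracking how the bounds on the quotients deteriorate'' to verify Whitney compatibility, but the paper shows there are \emph{no} estimates to track on a fixed stratum. Having normalized the canonical quotient field $\zeta(a,x) = (\zeta_1(a,x),\ldots,\zeta_q(a,x))$ so that $\supp\zeta(a,x)$ lies in the complement of $\cN(\Rel)$ (this requires the module of relations $\Rel_a$ and \emph{both} diagrams $\cN(M_a)$ and $\cN(\Rel_a)$, which your sketch conflates), one applies the operator $\p_{\xi,v} - \p_{x,v}$ to the identity $\hf_a = \sum_i \zeta_i(a,x)\Phi_i(a,x)$. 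Because $f$ and the $\Phi_i$ are $\cC^\infty$, that operator annihilates their Taylor fields, so $\bigl((\p_{\xi,v}-\p_{x,v})\zeta\bigr)(a,x) \in \Rel_a$. But its support still lies in the complement of $\cN(\Rel)$, since that complement is closed under formal differentiation (Remark \ref{rem:diagdiff}); by the uniqueness in formal division this forces $(\p_{\xi,v}-\p_{x,v})\zeta_i = 0$. That identity is precisely the hypothesis of Borel's lemma on $S$, and no quantitative estimate is needed. Without this observation, the ``uniform regularity of the canonical formal solutions'' you anticipate proving by {\L}ojasiewicz-type bounds is not accessible, and I do not believe your proposal can be closed along the lines you describe.
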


The formal condition at every point is, of course, necessary.
Theorem \ref{thm:main} in the special case $\cQ=\cO$ is the celebrated division theorem of Malgrange
(see \cite[Ch.\,VI]{Malg}); the case $p=q=1$ of Malgrange's theorem is due to H\"ormander \cite{Horm}
and {\L}ojasiewicz \cite{Loj}. Malgrange's proof relies on fundamental algebraic
properties of the ring $\cO_a$ of germs of real-analytic functions at a point $a$; in particular, on 
Noetherianity of $\cO_a$ and flatness of $\cO_a$ over $\cF_a$ (as reflected in Oka's theorem on
coherence of the sheaf of relations among analytic functions). These properties are not known for
quasianalytic classes, in general. Our proof of Theorem \ref{thm:main} requires only a weaker topological
version of Noetherianity, which is a consequence of resolution of singularities 
for quasianalytic classes \cite{BMselecta}. Theorem \ref{thm:main}
in the case $p=q=1$ is proved in \cite[Thm.\,6.4]{BMselecta} using resolution of singularities, but we
will not use this special case.

Our proof of Theorem \ref{thm:main} uses Hironaka's elementary formal division algorithm (a generalization
of Euclidean division to division by several functions, presented first in \cite{HiroAnn}; see also \cite{BMPisa})
in the way that we used it to prove Malgrange's division theorem in \cite[\S10]{BMrel}. We will need only
a stratified version of Oka's coherence theorem (``semicoherence'', in the language of \cite{BMann}) 
which, for quasianalytic functions, is a simple consequence
of the formal division algorithm and topological Noetherianity (see Section \ref{sec:param} and Theorem
\ref{thm:strat}). We feel that part of the interest of this article
is that our proof of Theorem \ref{thm:main} in the classical case $\cQ =\cO$ seems the simplest and
most direct approach to Malgrange's division theorem. Resolution of singularities 
is used in the paper to prove several geometric or metric properties of sets defined by quasianalytic
functions, which, in the case $\cQ=\cO$, are well-known consequences of techniques involving Weierstrass
preparation. The reader might want to look at the proof of Theorem \ref{thm:main} in \S\ref{subsec:main}
first, and work backwards from there, as a motivation for the parametrized division techniques developed
in Sections \ref{sec:formaldiv} and \ref{sec:param}.

Noetherianity of the local ring $\cQ_a$ of germs of functions of quasianalytic class $\cQ$ at a point
$a\in \IR^n$ is equivalent to the following variant of Theorem \ref{thm:main}: given $f \in \cQ_a$, 
there exist $g_1,\ldots,g_q \in \cQ_a$ such that $f = \sum_{j=1}^q g_j \Phi_j$ if and only if there is 
a formal solution at $a$. The existence of a solution $g_1,\ldots,g_q \in \cQ_a$ is unknown even under 
the stronger assumption of a formal solution at every point in a neighbourhood of $a$. It seems plausible
that, even under the latter assumption, a quasianalytic solution $g_1,\ldots,g_q$
may necessarily involve a certain loss of regularity (i.e., belong to a larger quasianalytic class $\cQ' \supseteq
\cQ$) depending on $\Phi_1,\ldots,\Phi_q$ and $f$; cf. \cite{BBB}.

\section{Hironaka's division algorithm and formal relations}\label{sec:formaldiv}
We use standard multiindex notation: Let $\IN$ denote the nonnegative integers. If $\al = (\al_1,\ldots,\al_n) \in
\IN^n$, we write $|\al| := \al_1 +\cdots +\al_n$, $\al! := \al_1!\cdots\al_n!$, $x^\al := x_1^{\al_1}\cdots x_n^{\al_n}$,
and $\p^{|\al|} / \p x^{\al} := \p^{\al_1 +\cdots +\al_n} / \p x_1^{\al_1}\cdots \p x_n^{\al_n}$. 
%We write $(i)$ for the multiindex with $1$ in the $i$th place and $0$ elsewhere.

Let $A$ be a ring. Let $A\llb x\rrb = A\llb x_1,\ldots,x_n\rrb$
denote the ring of formal power series in $n$ indeterminates with coefficients in $A$. Given $F(Y) \in A\llb x\rrb^p$,
we write $F(Y) = \sum_{j=1}^p \sum_{\al\in\IN^n} F_{(\al,j)}x^{(\al,j)}$, where each coefficient
$F_{(\al,j)}\in A$, and $x^{(\al,j)}:= (0,\ldots,x^\al,\ldots,0)$ with $x^\al$ in the $j$th place. If $(\al,j) \in
\IN^n\times\{1,\ldots,p\}$ and $\be \in \IN^n$, we define $(\al,j) + \be := (\al +\be,j)$. We will use the
following notation:
\begin{align*}
\supp F &:= \{(\al,j)\in \IN^n\times\{1,\ldots,p\}:\, F_{(\al,j)}\neq 0\},\\
\exp F &:= \min\,\supp F,\\
\mon F &:= F_{\exp F}x^{\exp F},
\end{align*}
where $\min$ is with respect to the total ordering of $\IN^n\times\{1,\ldots,p\}$ given by 
$\lex (|\al|,j,\allowbreak\al) = \lex (|\al|,j,\allowbreak\al_1,\ldots,\al_n)$, 
where $\lex$ denotes lexicographic order. We call $\exp F$
the \emph{initial exponent}, $\mon F$ the \emph{initial monomial} and $F_{\exp F}$ the
\emph{initial coefficient} of $F$. 

\begin{remark}\label{rem:order}
In the following,
instead of the preceding order, we may also use any total ordering of $\IN^n\times\{1,\ldots,p\}$ which is
compatible with addition in the sense that $(\al,j) + \be > (\al,j)$ if $\be \in \IN^n\setminus\{0\}$.
For example, we may use $\lex (L(\al),j,\al)$, where $L$ is a positive linear form 
$L(\al) = \sum_{i=1}^n \la_i\al_i$ (i.e., all $\la_i$ are positive real numbers).
%; in this case, we write $\ord\,F := L(\al)$, where $(\al,j) = \exp F$. 
\end{remark}

\subsection{Hironaka's formal division algorithm \cite{HiroAnn}, \cite[\S6]{BMrel}}\label{subsec:formaldiv}
\begin{theorem}\label{thm:formaldiv}
Let $L$ denote a positive linear form on $\IR^n$, and consider the corresponding ordering of
$\IN^n\times\{1,\ldots,p\}$ (as in Remark \ref{rem:order}).
Let $\Phi_1,\ldots,\Phi_q\in A\llb x\rrb^p$. Set $(\al_i,j_i):=\exp \Phi_i$, $i=1,\ldots,q$, and consider the
partition $\{\De_1,\ldots\De_q,\De\}$ of $\IN^n\times\{1,\ldots,p\}$ given by $\De_1 := (\al_1,j_1) +\IN^n$,
\begin{align*}
\De_i &:= (\al_i,j_i) +\IN^n \setminus \bigcup_{k=1}^{i-1}\De_k,\quad i=2,\ldots,q,\\
\De &:= \IN^n\times\{1,\ldots,p\} \setminus \bigcup_{i=1}^{q}\De_i.
\end{align*}
Suppose that $A$ is an integral domain, and let $\IK$ be the field of fractions of $A$.
Let $S$ denote the multiplicative subset of $A$ generated by the initial coefficients $\Phi_{i,(\al_i,j_i)}$,
and let $B$ denote any subring of $\IK$ containing the localization $S^{-1}A$.
Then, for every $F \in B\llb x\rrb^p$, there exist unique $Q_i = Q_i(F) \in B\llb x\rrb$, $i=1,\ldots,q$, and
$R = R(F) \in B\llb x\rrb^p$ such that
\begin{align*}
F &= \sum_{i=1}^q Q_i\Phi_i + R,\\
(\al_i,j_i) + \supp Q_i \subset &\De_i,\,\,\, i=1,\ldots,q,\,\,\, \text{and }\,\, \supp R \subset \De.
\end{align*}
Moreover,
$$
(\al_i,j_i) +\exp Q_i \geq \exp F,\,\,\, i=1,\ldots,q,\,\,\, \text{and}\,\, \exp R \geq \exp F.
$$

\end{theorem}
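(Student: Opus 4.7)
The plan is to construct $Q_1,\ldots,Q_q$ and $R$ by iteratively killing the leading term of $F$ (a generalization of the Euclidean division algorithm), exploit the fact that the ordering is a well-ordering with finite initial segments to converge in the $(x)$-adic topology, and derive uniqueness from the disjointness of $\Delta_1,\ldots,\Delta_q,\De$.

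First I would set $F^{(0)}:=F$, and having defined $F^{(k)}\in B\llb x\rrb^p$, let $(\be^{(k)},j^{(k)}):=\exp F^{(k)}$. Exactly one of the following holds: either $(\be^{(k)},j^{(k)})\in\De_i$ for a unique $i$, in which case I write $(\be^{(k)},j^{(k)})=(\al_i,j_i)+\ga^{(k)}$, set $c^{(k)}:=F^{(k)}_{(\be^{(k)},j^{(k)})}/\Phi_{i,(\al_i,j_i)}\in B$, define $F^{(k+1)}:=F^{(k)}-c^{(k)}x^{\ga^{(k)}}\Phi_i$, and record $c^{(k)}x^{\ga^{(k)}}$ as a contribution to $Q_i$; or $(\be^{(k)},j^{(k)})\in\De$, in which case I subtract $F^{(k)}_{(\be^{(k)},j^{(k)})}x^{(\be^{(k)},j^{(k)})}$ from $F^{(k)}$ and record this monomial as a contribution to $R$. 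In either case the choice of scalar exactly cancels the initial monomial, so $\exp F^{(k+1)}>\exp F^{(k)}$.

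Next I would verify convergence. Because $L$ is a \emph{positive} linear form, for every $M>0$ the set $\{(\al,j):L(\al)\leq M\}$ is finite, so every initial segment of the ordering is finite and the strictly increasing sequence $\exp F^{(k)}$ eventually exceeds any given position; thus $F^{(k)}\to 0$ in the $(x)$-adic topology. Moreover no position $(\ga,i)$ (resp.\ $(\be,j)\in\De$) is ever revisited, for revisiting it would force two values $\exp F^{(k)}$ to coincide. Hence the accumulated contributions assemble into well-defined formal series $Q_i\in B\llb x\rrb$ and $R\in B\llb x\rrb^p$, and telescoping $F-F^{(N)}=\sum_{k<N}(F^{(k)}-F^{(k+1)})$ together with $F^{(N)}\to 0$ yields $F=\sum_i Q_i\Phi_i + R$. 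The support inclusions $(\al_i,j_i)+\supp Q_i\subset\De_i$ and $\supp R\subset\De$ are built into the algorithm, and the inequalities $(\al_i,j_i)+\exp Q_i\geq\exp F$ and $\exp R\geq\exp F$ hold because $\exp F^{(k)}\geq\exp F^{(0)}$ for all $k$.

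For uniqueness it suffices to show that if $\sum_i Q_i\Phi_i+R=0$ with the stated support conditions then every $Q_i$ and $R$ vanishes. Since $B\subset\IK$ is a domain, for each nonzero summand $Q_i\Phi_i$ the initial monomial is the product of initial monomials and is supported at $(\al_i,j_i)+\exp Q_i\in\De_i$, while the initial monomial of a nonzero $R$ is supported in $\De$. Because $\De_1,\ldots,\De_q,\De$ are pairwise disjoint, the smallest of these initial positions survives uncancelled in the sum, contradicting that the sum is zero. The main obstacle I anticipate is the bookkeeping in step two: ensuring rigorously that the iterative procedure produces genuine formal power series and that the identity $F=\sum_i Q_i\Phi_i+R$ holds term by term, which hinges on the positivity of $L$ making the ordering a well-ordering with finite initial segments.
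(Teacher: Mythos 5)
Your proof is correct, and the core ingredients match the paper's: a reduction step that strictly increases the initial exponent, convergence in the Krull topology because the $\lex(L(\al),j,\al)$-order has finite initial segments (this is exactly where positivity of $L$ enters), and uniqueness from the disjointness of $\De_1,\ldots,\De_q,\De$. The organization of the iteration, however, is genuinely different. You peel off one initial monomial per step, a literal generalization of Euclidean long division; to see that $Q_i$ and $R$ are well-defined formal series you must (and do) observe that distinct steps contribute to distinct positions. The paper instead introduces the operator $E$: it distributes \emph{all} monomials of $F$ at once among the $\De_i$ and $\De$ (producing $Q'_i(F)$ and $R'(F)$), sets $E(F) = \sum_i Q'_i(F)\bigl(\Phi_{i,(\al_i,j_i)}x^{(\al_i,j_i)} - \Phi_i\bigr)$, notes $\exp E(F) > \exp F$, and iterates on $E^k(F)$, summing the contributions. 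The paper's bulk step makes the telescoping identity transparent with no bookkeeping about revisited positions, while your per-monomial step is more elementary and closer to the classical algorithmic picture; both are valid, and yours has the advantage of giving an explicit uniqueness argument, which the paper leaves to the reader.
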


\begin{proof} 
Let $F \in B\llb x\rrb^p$. Then $F$ has a unique expression
$$
F = \sum_{i=1}^q Q'_i(F) \Phi_{i,(\al_i,j_i)} x^{(\al_i,j_i)} + R'(F),
$$
where each $Q'_i(F) \in B\llb x\rrb$, $R'(F) \in B\llb x\rrb^p$, 
$(\al_i,j_i) + \supp Q'_i(F) \subset \De_i$ ($i=1,\ldots,q$), $\supp R'(F) \subset \De$.
Clearly, each summand $Q'_i(F) \Phi_{i,(\al_i,j_i)} x^{(\al_i,j_i)}$ and $R'(F)$ has initial exponent
$\geq \exp F$. Set
\begin{align*}
E(F) :&= F - \left(\sum_{i=1}^q Q'_i(F) \Phi_i + R'(F)\right)\\
&= \sum_{i=1}^q Q'_i(F)\left(\Phi_{i,(\al_i,j_i)} x^{(\al_i,j_i)} - \Phi_i\right).
\end{align*}
Then $\exp E(F) > (\al_i,j_i) + \exp Q'_i(F) \geq \exp F$ (for each $i$) .
Define
\begin{align*}
Q_i(F) &:= \sum_{k=0}^\infty Q'_i(E^k(F)),\quad i=1,\ldots, q,\\
R(F) &:= \sum_{k=0}^\infty R'(E^k(F)),
\end{align*}
where $E^0(F) := F$ and $E^{k+1}(F) := E(E^k(F))$, $k\geq 0$. Then all $Q_i(F)$ and $R(F)$
converge with respect to the Krull topology of $\IK\llb x\rrb$, and
$F = \sum Q_i(F) \Phi_i + R(F)$, as required, since, for all $l \in \IN$,
\[
F - \sum_{i=1}^q \sum_{k=0}^l Q_i(E^k(F))\Phi_i - \sum_{k=0}^l R(E^k(F)) = E^{l+1}(F). \qedhere
\]
\end{proof}

\subsection{The diagram of initial exponents \cite[\S6]{BMrel}}\label{subsec:diag} Let $A$ be a ring, and let
$M$ be a submodule
of $A\llb x\rrb^p$. We define the \emph{diagram of initial exponents} $\cN(M) \subset \IN^n\times \{1,\ldots,p\}$ as
$$
\cN(M) := \{\exp F:\, F \in M\setminus \{0\}\}.
$$
Clearly, $\cN(M) + \IN^n = \cN(M)$.
We say that $(\al_0,j_0)\in \IN^n\times \{1,\ldots,p\}$ is a \emph{vertex} of $\cN(M)$ if 
$(\cN(M)\setminus\{(\al_0,j_0)\}) + \IN^n \neq \cN(M)$. It is easy to see that $\cN(M)$ has finitely many vertices.
Let $(\al_i,j_i)$, $i=1,\ldots,q$, denote the vertices of $\cN(M)$. Choose $\Phi_i \in M$ such that
$(\al_i,j_i) = \exp \Phi_i$, $i=1,\ldots,q$.

\begin{corollary}\label{cor:diag}
Assume that $A$ is an integral domain. Then,
using the notation of Theorem \ref{thm:formaldiv}, we have:
\begin{enumerate}
\item\begin{enumerate}
\item $\cN(M) = \bigcup_{i=1}^q \De_i$;
\smallskip\item $\Phi_1,\ldots,\Phi_q$ generate $B\llb x\rrb\cdot M$;
\smallskip\item if $G \in B\llb x\rrb^p$, then $G \in B\llb x\rrb\cdot M$ if and only if $R(G) = 0$.
\end{enumerate}
\medskip\item There exist unique generators $\Psi_i$, $i=1,\ldots,q$, of $S^{-1}A\llb x\rrb\cdot M$
such that
$$
\Psi_i = x^{(\al_i,j_i)} + R_i,\quad \text{where }\, \supp R_i \subset \De.
$$
\end{enumerate}
\end{corollary}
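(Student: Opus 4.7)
The plan is to derive all parts of the corollary from the uniqueness clause of Theorem \ref{thm:formaldiv}, combined with one preliminary lemma: the comparison lemma $\cN(B\llb x\rrb\cdot M) = \cN(M)$. I would prove (1)(a) first from Dickson's lemma, then the comparison lemma, then (1)(b) and (1)(c) in tandem, and finally (2) by applying the division algorithm to the monomials $x^{(\al_i,j_i)}$ sitting at the vertices.

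For (1)(a), the inclusion $\bigcup_i \De_i \subset \cN(M)$ is immediate: each $(\al_i,j_i) = \exp \Phi_i$ lies in $\cN(M)$, and $\cN(M)$ is stable under translation by $\IN^n$. For the reverse inclusion, fix $j\in\{1,\ldots,p\}$; Dickson's lemma applied to the projection of $\cN(M)\cap(\IN^n\times\{j\})$ to $\IN^n$ shows that every element of that projection dominates, in the componentwise order on $\IN^n$, one of the finitely many vertices $(\al_i,j_i)$ with $j_i = j$. Hence $\cN(M) \subset \bigcup_i ((\al_i,j_i)+\IN^n) = \bigcup_i\De_i$.

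The comparison lemma $\cN(B\llb x\rrb\cdot M) = \cN(M)$ is the main technical step. The nontrivial inclusion $\subset$ uses essentially the hypothesis that $A$ be a domain, so that $B \subset \IK$ and $B\llb x\rrb$ are integral domains and initial exponents behave controllably in $B\llb x\rrb$-combinations of elements of $M$. Granted the lemma, (1)(b) and (1)(c) follow together: given $G \in B\llb x\rrb^p$, Theorem \ref{thm:formaldiv} yields $G = \sum_i Q_i(G)\,\Phi_i + R(G)$ with $\supp R(G)\subset\De$. If $G\in B\llb x\rrb\cdot M$, then $R(G) = G-\sum_i Q_i(G)\,\Phi_i$ also lies in $B\llb x\rrb\cdot M$, and $R(G)\neq 0$ would force $\exp R(G) \in \cN(B\llb x\rrb\cdot M) = \cN(M) = \bigcup_i\De_i$, contradicting $\supp R(G)\subset\De$. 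Hence $R(G) = 0$, which gives the forward direction of (c) and, since then $G = \sum_i Q_i(G)\,\Phi_i$, also (b); the converse of (c) is trivial.

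For part (2), apply Theorem \ref{thm:formaldiv} (with $B = S^{-1}A$) to the monomial $F = x^{(\al_i,j_i)}$ and set
\[
\Psi_i := x^{(\al_i,j_i)} - R(x^{(\al_i,j_i)}) = \sum_k Q_k(x^{(\al_i,j_i)})\,\Phi_k \in S^{-1}A\llb x\rrb\cdot M.
\]
Then $\Psi_i = x^{(\al_i,j_i)} + R_i$ with $R_i := -R(x^{(\al_i,j_i)})$ supported in $\De$; since Theorem \ref{thm:formaldiv} also gives $\exp R_i \geq (\al_i,j_i)$, we have $\exp \Psi_i = (\al_i,j_i)$, so $\{\Psi_i\}$ and $\{\Phi_i\}$ share the same vertex set and the same partition $\{\De_i,\De\}$. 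Because each $\Psi_i$ has initial coefficient $1$, division by the $\Psi_i$ works entirely over $S^{-1}A\llb x\rrb$, and the argument of the preceding paragraph, with $\Psi_i$ replacing $\Phi_i$, shows that $\Psi_1,\ldots,\Psi_q$ generate $S^{-1}A\llb x\rrb\cdot M$. Uniqueness follows by the same contradiction: any difference $\Psi_i - \Psi_i'$ of two normal-form generators lies in $S^{-1}A\llb x\rrb\cdot M$ with support in $\De$, and must vanish. The main obstacle throughout is the comparison lemma; once it is granted, everything else is a direct application of the uniqueness of Hironaka's division algorithm.
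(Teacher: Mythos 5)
Your proposal is correct and follows essentially the same route as the paper: divide by the $\Phi_i$, observe that a nonzero remainder lying in $B\llb x\rrb\cdot M$ would have initial exponent in $\cN(M)$ yet support in $\De$ (a contradiction), then obtain the standard basis by dividing the vertex monomials. The comparison $\cN(B\llb x\rrb\cdot M)=\cN(M)$ that you correctly isolate as the key step, but leave as a sketch, is proved by truncating the $B\llb x\rrb$-coefficients to polynomials and clearing denominators back to $A$ (using that $A$ is a domain and $M$ is an $A\llb x\rrb$-submodule); this is exactly the argument of the paper's later Lemma \ref{lem:prep}, which the paper's own terse proof of this corollary uses tacitly.
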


\begin{proof}
(1) (a) is obvious. Let $G \in B\llb x\rrb^p$. Write $G = \sum_{i=1}^q Q_i(G) \Phi_i + R(G)$, according to the
formal division algorithm. Then $G \in B\llb x\rrb\cdot M$ if and only if $R(G) \in M$; i.e., if and 
only if $R(G)=0$ (since $\supp R(G) \bigcap \cN(M) = \emptyset$). (b), (c) follow immediately.

\medskip\noindent
(2) For each $i$, write $x^{(\al_i,j_i)} = \sum_{j=1}^q Q_{ij} \Phi_j - R_i$, according to the division
algorithm. Clearly, we can take $\Psi_i = x^{(\al_i,j_i)} +R_i$, $i=1,\ldots,q$. 
\end{proof}

We call $\Psi_1,\ldots,\Psi_q$ in Corollary \ref{cor:diag} the \emph{standard basis} of $M$ (or of
$S^{-1}A\llb x\rrb\cdot M$).

We totally order the set of diagrams
\begin{equation}\label{eq:diag}
\cD(n,p):= \{\cN \subset \IN^n\times\{1,\ldots,p\}:\, \cN + \IN^n = \cN\},
\end{equation}
as follows: Given $\cN \in \cD(n,p)$, let $\cV(\cN)$ denote the sequence 
obtained by listing the vertices of $\cN$ in increasing order and completing
this list to an infinite sequence by using $\infty$ for all the remaining terms. If $\cN_1, \cN_2 \in \cD(n,p)$
we say that $\cN_1<\cN_2$ if $\cV(\cN_1)< \cV(\cN_2)$ with respect to the lexicographic ordering on the
set of such sequences. Clearly, $\cN_1\leq\cN_2$ if $\cN_1 \supseteq \cN_2$.

\begin{remark}\label{rem:diagdiff}
The condition $\cN + \IN^n = \cN$ implies that the set of elements of $\IR\llb x\rrb^p$ with
supports in the complement of $\cN$ is closed under formal differentiation. This simple property
of the diagram will play an important part in the proof of our main theorem (see \S\ref{subsec:main}).
\end{remark}

Lemma \ref{lem:prep} and Corollary \ref{cor:prep} following will be
needed in \S\ref{subsec:paramrel}. 

\begin{lemma}\label{lem:prep}
Let $A$ denote an integral domain, and let $\IK$ be the field
of fractions of $A$. Let $\Phi_1,\ldots,\Phi_q \in A\llb x\rrb^p$. Let $M\subset A\llb x\rrb^p$
and $M' \subset \IK\llb x\rrb^p$ denote the submodules generated by $\Phi_1,\ldots,\Phi_q$.
Then $\cN(M) = \cN(M')$. 
\end{lemma}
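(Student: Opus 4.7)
The inclusion $\cN(M) \subseteq \cN(M')$ is immediate from $M \subseteq M'$, so the substance lies in the reverse inclusion. After discarding any $\Phi_i$ equal to zero, my plan is to invoke Theorem \ref{thm:formaldiv} with $B = \IK$ (permitted since $\IK \supseteq S^{-1}A$) and to exploit the resulting partition $\{\De_1,\ldots,\De_q,\De\}$. As a preliminary observation, $\bigcup_{i=1}^q ((\al_i,j_i) + \IN^n) \subseteq \cN(M)$, since each $(\al_i,j_i) = \exp \Phi_i$ lies in $\cN(M)$ and $\cN(M) + \IN^n = \cN(M)$; it therefore suffices to show $\cN(M') \subseteq \bigcup_i ((\al_i,j_i) + \IN^n)$.

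Given a nonzero $G \in M'$, I would run the formal division algorithm. Since $G \in \IK\llb x\rrb \cdot M = M'$, Corollary \ref{cor:diag}(1)(c) gives $R(G) = 0$, so $G = \sum_{i=1}^q Q_i \Phi_i$ with each $Q_i \in \IK\llb x\rrb$. The task is then to pin down $\exp G$ from this representation. Upward, the moreover clause of Theorem \ref{thm:formaldiv} gives $(\al_i,j_i) + \exp Q_i \geq \exp G$ for each nonzero $Q_i$. Downward, since $\IK$ is a domain and the total order is compatible with addition, one has $\exp(Q_i \Phi_i) = (\al_i,j_i) + \exp Q_i$, whence $\exp G \geq \min_i\,((\al_i,j_i) + \exp Q_i)$. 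Combining the two estimates forces equality, so $\exp G = (\al_{i_0},j_{i_0}) + \exp Q_{i_0}$ for some $i_0$, placing $\exp G \in (\al_{i_0},j_{i_0}) + \IN^n$.

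I do not foresee a genuine obstacle. The one step deserving care is the identity $\exp(Q_i \Phi_i) = \exp Q_i + \exp \Phi_i$, which is precisely where the integral domain hypothesis enters: it guarantees that the leading coefficient of $Q_i \Phi_i$ is the nonzero product of the leading coefficients, while compatibility of the ordering with addition (as in Remark \ref{rem:order}) prevents any smaller exponent from appearing via lower-order interactions.
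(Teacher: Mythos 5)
The reduction at the start of your argument is to a claim that is actually false, and the key citation inside does not apply. You propose to show $\cN(M') \subseteq \bigcup_i \bigl((\al_i,j_i) + \IN^n\bigr)$ by dividing $G \in M'$ by $\Phi_1,\ldots,\Phi_q$ and invoking Corollary \ref{cor:diag}(1)(c) to conclude $R(G) = 0$. But that corollary is stated for $\Phi_i$ chosen so that $\exp\Phi_i$ run over the \emph{vertices} of $\cN(M)$; in Lemma \ref{lem:prep} the $\Phi_i$ are arbitrary generators, so the partition $\{\De_i\}$ need not cover $\cN(M)$ and $R(G)$ need not vanish. Concretely, take $p=1$, $n=2$, $A=\IK=\IR$, $\Phi_1 = x_1 - x_2$, $\Phi_2 = x_1 + x_2$. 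Here $M = M' = (x_1,x_2)$, so $\cN(M') = \IN^2\setminus\{(0,0)\}$; but $\exp\Phi_1 = \exp\Phi_2 = (0,1)$ in the ordering of the paper, so $\bigcup_i \bigl((\al_i,j_i)+\IN^2\bigr) = (0,1)+\IN^2$, which misses $(1,0) \in \cN(M')$. And dividing $G = x_1$ by $\Phi_1,\Phi_2$ forces $Q_2 = 0$ (since $\De_2 = \emptyset$) and $\supp R \subset \{(k,0)\}$, so setting $x_1 = x_2$ shows $R(G) = x_1 \neq 0$ even though $G \in M'$. Your ``downward'' estimate $\exp(Q_i\Phi_i) = (\al_i,j_i) + \exp Q_i$ is fine, but it cannot rescue a representation $G = \sum Q_i\Phi_i + R(G)$ in which the remainder is nonzero.

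The paper's proof bypasses division entirely and works directly with the arbitrary generators: given $(\al,j)\in\cN(M')$, pick a $\IK\llb x\rrb$-combination of the $\Phi_i$ with that initial exponent, truncate the coefficient series to polynomials of sufficiently high degree (which does not change the initial exponent), and then clear the finitely many denominators; multiplying by a nonzero element of $A$ preserves the initial exponent and lands the combination in $M$. If you want to salvage a division-based argument, you would have to divide by a system of vertex representatives of $\cN(M)$ rather than by the given $\Phi_i$, and even then you should check that applying Corollary \ref{cor:diag}(1)(c) with $B=\IK$ does not implicitly presuppose the very inclusion $\cN(\IK\llb x\rrb\cdot M)\subseteq\cN(M)$ you are trying to establish.
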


\begin{proof}
Clearly, $\cN(M) \subset \cN(M')$. Given $(\al,j) \in \cN(M')$, we can choose
$p_i(x) \in \IK[x]$, $i=1,\ldots,q$, such that $\exp\left(\sum_{i=1}^q p_1\Phi_i\right) = (\al,j)$; clearing
the denominators of the coefficients of the $p_i(x)$, we get $\Phi \in M$ such that $\exp \Phi = (\al,j)$.
Hence $\cN(M') \subset \cN(M)$.
\end{proof}

\begin{remarks}\label{rem:mult,nak}
(1) By the formal division theorem, submodules $E\subset F$ of $\IK\llb x\rrb^p$ coincide if and
only if they have the same diagram. Let $N\subset A\llb x\rrb^p$ be a submodule of $M$. It follows from
Lemma \ref{lem:prep} that $\cN(N) = \cN(M)$ if and only if $M,\,N$ generate the same submodule of $\IK\llb x\rrb^p$.
The construction of standard bases shows, in fact, that there is a finitely generated multiplicative subset $S$
of $A$ such that $\cN(N) = \cN(M)$ if and only if $M,\,N$ generate the same 
submodule of $S^{-1}A\llb x\rrb^p$.

\medskip\noindent
(2) Let $R$ denote a local ring, with maximal ideal $\um$ and residue field $\uk = R/\um$. Let
$E$ be a finitely generated $R$-module. If $g_R(E)$ denotes
the minimal number of generators of $E$, then
$$
g_R(E) = \dim_{\uk} E\otimes_R \uk = \dim_{\uk} E/\um\cdot E,
$$
(by Nakayama's lemma).
\end{remarks}

\begin{corollary}\label{cor:prep}
Suppose that $\Phi_1,\ldots,\Phi_m$ is a smallest subset of $\Phi_1,\ldots,\Phi_q$ such that
$\cN(N) = \cN(M)$, where $N$ is the submodule of $M$ generated by $\Phi_1,\ldots,\Phi_m$.
Then
$$
m = \dim_{\IK}N\otimes_{\IK\llb x\rrb} \IK = \dim_{\IK}N/\um\cdot N,
$$
where $\um$ denotes the maximal ideal of $\IK\llb x\rrb$.
\end{corollary}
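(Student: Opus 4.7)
The plan is to reduce the statement to a standard Nakayama computation over the local ring $\IK\llb x\rrb$. First, for any subset $T \subseteq \{\Phi_1,\ldots,\Phi_q\}$, Lemma \ref{lem:prep} identifies the diagram of the $A\llb x\rrb$-submodule generated by $T$ with that of the $\IK\llb x\rrb$-submodule it generates; combined with the criterion of Remarks \ref{rem:mult,nak}(1) (submodules of $\IK\llb x\rrb^p$ coincide iff their diagrams agree), one sees that $\cN(\langle T\rangle_{A\llb x\rrb}) = \cN(M)$ if and only if $T$ generates $E := \IK\llb x\rrb \cdot M$ as a $\IK\llb x\rrb$-module. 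Thus the hypothesis on $\Phi_1,\ldots,\Phi_m$ translates to: $\Phi_1,\ldots,\Phi_m$ generate $E$, and no proper subset does.

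Next, I would apply Remarks \ref{rem:mult,nak}(2) to the local ring $\IK\llb x\rrb$, with maximal ideal $\um$ and residue field $\IK$, and the finitely generated module $E$. This yields
$$
g_{\IK\llb x\rrb}(E) \;=\; \dim_\IK E \otimes_{\IK\llb x\rrb} \IK \;=\; \dim_\IK E/\um\cdot E,
$$
which are precisely the two right-hand quantities in the statement, after interpreting $N$ as $E = \IK\llb x\rrb \cdot N$ (as the notation $N\otimes_{\IK\llb x\rrb} \IK$ already forces). Since $\Phi_1,\ldots,\Phi_m$ generate $E$, one has $g_{\IK\llb x\rrb}(E) \leq m$, so everything reduces to establishing the reverse inequality.

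The one substantive step — the only place where I anticipate genuine content — is promoting irredundancy of the generating set to minimality of cardinality. This is the classical Nakayama dichotomy: if $m > g_{\IK\llb x\rrb}(E)$, then the residues $\overline{\Phi}_1,\ldots,\overline{\Phi}_m$ are $\IK$-linearly dependent in $E/\um E$, so some $\overline{\Phi}_j$ is a $\IK$-linear combination of the others. Lifting the relation to $E$ and collecting the coefficient of $\Phi_j$ produces an identity $(1-u)\Phi_j = \sum_{i\neq j} a_i \Phi_i$ with $u \in \um$ and $a_i \in \IK\llb x\rrb$; since $1-u$ is a unit in the local ring, $\Phi_j$ belongs to the $\IK\llb x\rrb$-submodule generated by the remaining $\Phi_i$. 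This contradicts the irredundancy obtained in the first paragraph, forcing $m = g_{\IK\llb x\rrb}(E)$. Beyond this elementary Nakayama step, I do not foresee any obstacle.
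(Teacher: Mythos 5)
Your proof is correct and follows essentially the same route the paper has in mind: the paper's own proof is the one-line remark that the corollary ``is a consequence of Lemma~\ref{lem:prep} and Remarks~\ref{rem:mult,nak},'' and your argument is simply the careful unwinding of that — translating $\cN(N)=\cN(M)$ via Lemma~\ref{lem:prep} and Remarks~\ref{rem:mult,nak}(1) into generation of $\IK\llb x\rrb\cdot M$, then invoking Nakayama over the local ring $\IK\llb x\rrb$ as in Remarks~\ref{rem:mult,nak}(2). The Nakayama step you single out (irredundant generating set has cardinality $g_R(E)$) is exactly the content the paper leaves implicit, and you handle it correctly.
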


\begin{proof}
This is a consequence of Lemma \ref{lem:prep} and Remarks \ref{rem:mult,nak}.
\end{proof}

\subsection{The module of formal relations \cite[Ch.\,6]{BMPisa}}\label{subsec:rel} 
We continue to use the notation of Theorem \ref{thm:formaldiv}.
Let $F_1,\ldots,F_q \in B\llb x\rrb^p$. The \emph{module of relations} among $F_1,\ldots,F_q$ denotes
the submodule $\Rel = \Rel(F_1,\ldots,F_q)$ of $B\llb x\rrb^q$ defined as
$$
\Rel := \{H = (H_1,\ldots,H_q) \in B\llb x\rrb^q:\, \sum_{i=1}^q H_i F_i = 0\}.
$$

Let $A$ be an integral domain, and let $M$ be a submodule of $A\llb x\rrb^p$. 
Let $\Psi_1,\ldots,\Psi_q \allowbreak\in S^{-1}A\llb x\rrb^p \subset B\llb x\rrb^p$
denote the standard basis of $M$, and let $\Rel \subset S^{-1}A\llb x\rrb^p \subset B\llb x\rrb^q$ 
be the module of relations among
$\Psi_1,\ldots,\Psi_q$. Consider the partition $\{\De_i\}$ of the diagram of initial exponents $\cN(M)$ given in
Theorem \ref{thm:formaldiv}. Each $\De_i$ has the form
$$
\De_i = (\al_i,j_i) + \Box_i,\,\,\, \text{where}\,\, \Box_i \subset \IN^n.
$$
We define
$$
\cN := \{(\ga,i) \in \IN^n\times\{1,\ldots,q\}:\, \ga \notin \Box_i\};
$$
i.e., each $\cN \bigcap \left(\IN^n\times\{i\}\right)$ is the complement of $\Box_i\times\{i\}$.
Clearly, $\cN + \IN^n = \cN$. We will show that $\cN$ is the diagram of initial exponents $\cN(\Rel)$
of $\Rel$, for a suitable ordering of $\IN^n\times\{1,\ldots,q\}$.

\setlength{\unitlength}{.2cm}
\begin{figure}[H]
\minipage[b]{0.15\textwidth}
\begin{picture}(7,14)
\end{picture}
\endminipage
\minipage[b]{0.4\textwidth}\begin{small}
\begin{picture}(16,14)
\thicklines
\put(0,0){\line(1,0){16}}
\put(0,0){\line(0,1){14}}
\thinlines
\put(0,12){\line(1,0){9}}\put(0,12){\circle*{.5}}
\put(9,2){\line(1,0){7}}
\put(9,2){\line(0,1){6}}\put(9,2){\circle*{.5}}
\put(9,10){\line(0,1){4}}
\put(6,6){\line(1,0){3}}
\put(6,6){\line(0,1){6}}\put(6,6){\circle*{.5}}
%\put(0.2,4.8){\makebox(0,0)[lt]{$\al_1$}}
%\put(4,.8){\makebox(0,0)[rt]{$\al_2$}}
\put(6,5.5){\makebox(0,0)[rt]{$(\al_i,j_i)$}}
%\put(3.5,6){\makebox(0,0){$\D_1$}}
%\put(5.5,3){\makebox(0,0){$\D_2$}}
\put(6.7,8.3){\makebox(0,0)[lb]{$\De_i = (\al_i,j_i) + \Box_i$}}
%\put(.75,.75){\makebox(0,0){$\De$}}
\end{picture}\end{small}
\endminipage
\minipage[b]{0.4\textwidth}\begin{small}
\begin{picture}(14,14)
\thicklines
\put(0,0){\line(1,0){14}}
\put(0,0){\line(0,1){14}}
\thinlines
\put(3,0){\line(0,1){2}}\put(3,0){\circle*{.5}}
\put(3,4){\line(0,1){2}}
\put(0,6){\line(1,0){3}}\put(0,6){\circle*{.5}}
\put(0.8,2.3){\makebox(0,0)[lb]{$\Box_i \times \mbox{\scriptsize{$\{i\}$}}$}}
\put(3.5,8.3){\makebox(0,0)[lb]{$\cN = \cN(\Rel)$}}
\end{picture}\end{small}
\endminipage
\end{figure}

Let $(\ga_k,i_k)$, $k=1,\ldots,s$, denote the vertices of $\cN$. By the 
formal division algorithm (Theorem \ref{thm:formaldiv}), for all $k=1,\ldots,s$,
$$
x^{\ga_k}\Psi_{i_k} = \sum_{j=1}^q Q_{kj}\Psi_j,\quad \text{where each }\, Q_{kj}\in B\llb x\rrb,\,\, \supp Q_{kj} \subset \Box_j.
$$
For each $k=1,\ldots,s$, set
\begin{equation}\label{eq:rel}
P_k := x^{(\ga_k,i_k)} - Q_k\, \in \Rel,\quad \text{where }\, Q_k := (Q_{k1},\ldots,Q_{kq}).
\end{equation}

\begin{theorem}\label{thm:rel}
Let $(\ga_k,i_k)$, $k=1,\ldots,s$, denote the vertices of $\cN$, and define $P_1,\ldots,P_s$
as in \eqref{eq:rel}. Then, for a suitable total ordering of $\IN^n\times\{1,\ldots,q\}$, $\cN = \cN(\Rel)$
and $P_1,\ldots,P_s$ is the standard basis of $\Rel$.
\end{theorem}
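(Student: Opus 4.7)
The plan is to choose a suitable total ordering $\prec$ on $\IN^n\times\{1,\ldots,q\}$, verify directly that $\exp P_k=(\ga_k,i_k)$ under $\prec$ (yielding $\cN\subseteq\cN(\Rel)$), and then exploit the cancellation inherent in any relation $\sum H_j\Psi_j=0$ to prove $\cN(\Rel)\subseteq\cN$. Concretely, I would define $(\ga,i)\prec(\ga',i')$ to mean that either $(\al_i+\ga,j_i)<(\al_{i'}+\ga',j_{i'})$ in the ordering used in Theorem~\ref{thm:formaldiv}, or these push-forwards are equal and $i>i'$. It is routine that $\prec$ is a total order on $\IN^n\times\{1,\ldots,q\}$ compatible with addition.

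For $\cN\subseteq\cN(\Rel)$: Theorem~\ref{thm:formaldiv} gives $(\al_j,j_j)+\exp Q_{kj}\geq(\al_{i_k}+\ga_k,j_{i_k})$, so for any $\ga\in\supp Q_{kj}$ one has $(\al_j+\ga,j_j)\geq(\al_{i_k}+\ga_k,j_{i_k})$. Equality would place the common point in $\De_j$ (since $\ga\in\Box_j$) and also in some $\De_l$ with $l<i_k$ (since $\ga_k\notin\Box_{i_k}$ forces $(\al_{i_k}+\ga_k,j_{i_k})\notin\De_{i_k}$), so disjointness of the partition forces $j<i_k$. Either way $(\ga,j)\succ(\ga_k,i_k)$, hence $\exp P_k=(\ga_k,i_k)$, and closure of $\cN(\Rel)$ under $+\IN^n$ gives $\cN\subseteq\cN(\Rel)$.

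For the reverse inclusion, suppose for contradiction that $H\in\Rel\setminus\{0\}$ has $\exp H=(\ga_0,i_0)$ with $\ga_0\in\Box_{i_0}$. A short check forces $\exp H_{i_0}=\ga_0$. Setting $e_j:=(\al_j+\exp H_j,j_j)$, the integral-domain property of $B\llb x\rrb$ gives $\exp(H_j\Psi_j)=e_j$, and the condition $(\exp H_j,j)\succeq(\ga_0,i_0)$ translates to $e_j\geq e_{i_0}=(\al_{i_0}+\ga_0,j_{i_0})$ in the original order. Because $\exp R_j>(\al_j,j_j)$, the term $H_jR_j$ contributes nothing at $x^{e^*}$ for $e^*:=\min_j e_j=e_{i_0}$, so the coefficient of $x^{e^*}$ in $H_j\Psi_j$ equals the initial coefficient of $H_j$ (nonzero) when $e_j=e^*$ and vanishes otherwise. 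The relation $\sum H_j\Psi_j=0$ then forces a second index $j'\neq i_0$ with $e_{j'}=e^*$, and the tiebreaker in $\prec$ gives $j'<i_0$. Now $e^*\in\De_{i_0}$ also lies in $(\al_{j'},j_{j'})+\IN^n$, a set contained in $\De_{j'}\cup\bigcup_{k<j'}\De_k$ by construction of the partition; combined with $i_0\neq j'$ and $i_0>j'$, disjointness of the $\De_l$'s yields the contradiction.

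Once $\cN(\Rel)=\cN$ is established, each $P_k=x^{(\ga_k,i_k)}-Q_k$ has $\supp(-Q_k)\subseteq(\IN^n\times\{1,\ldots,q\})\setminus\cN$ (since $\supp Q_{kj}\subseteq\Box_j$), so Corollary~\ref{cor:diag}(2) identifies $P_1,\ldots,P_s$ as the standard basis of $\Rel$. The main obstacle is the reverse inclusion: the tiebreaker in $\prec$ must be oriented so that the cancellation argument produces an index $j'<i_0$, which is exactly what enables the disjointness-of-partition argument to close.
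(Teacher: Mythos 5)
Your proof is correct, and while the first half ($\cN\subseteq\cN(\Rel)$ via $\exp P_k=(\ga_k,i_k)$ under a pushforward-based order with a $-i$ tiebreaker) matches the paper's strategy, your argument for the reverse inclusion is genuinely different. The paper takes $H\in\Rel$, divides it by $P_1,\ldots,P_s$ to produce a remainder $T$ with each $\supp T_i\subset\Box_i$, observes that $T\in\Rel$ so $\sum T_i\Psi_i=0$, and then invokes uniqueness in the division of $0$ by $\Psi_1,\ldots,\Psi_q$ to conclude $T=0$, hence $\supp H\subset\cN$. Your argument instead works directly on the relation $\sum H_j\Psi_j=0$: you isolate the minimal pushforward $e^*$, show the coefficient of $x^{e^*}$ picks up exactly the initial coefficients of those $H_j$ achieving $e_j=e^*$, and deduce a second contributor $j'<i_0$ whose $\De$-membership contradicts disjointness of the partition. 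Both are complete and comparably short; the paper's route has the aesthetic advantage of using the division algorithm twice (once to build the $P_k$, once to reduce $H$), while yours is more elementary and makes the cancellation mechanism explicit. A minor technical difference worth noting: your order pulls back the full order on $\IN^n\times\{1,\ldots,p\}$ (including the index $j$), whereas the paper's order $\lex(L(\al)+L(\al_i),\al+\al_i,-i)$ omits the $j$-slot; your choice is arguably cleaner because it makes the equality case in both halves of the argument automatically force $j_j=j_{i_k}$, which is exactly what you use.
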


\begin{remark}\label{rem:rel}
The diagram of initial exponents $\cN(M)$ is defined using the ordering of $\IN^n\times \{1,\ldots,p\}$ given by
$\lex (L(\al),j,\al)$, where $L$ is a positive linear form, as in Remark \ref{rem:order}. The diagram $\cN = \cN(\Rel)$
of Theorem \ref{thm:rel} will be defined in the proof following using a somewhat different ordering of $\IN^n\times \{1,\ldots,q\}$
depending on the same linear form $L$ and also on the vertices of $\cN(M)$.
\end{remark}

\begin{proof}[Proof of Theorem \ref{thm:rel}]
Given $\al \in \IN^n$ and $i=1,\ldots,q$, we can write
\begin{equation}\label{eq:rel1}
x^\al \Psi_i = \sum_{j=1}^q Q_j \Psi_j,\quad \text{where }\, \supp Q_j \subset \Box_j,
\end{equation}
by the formal division algorithm. Set
$$
P^{\al}_i = x^{(\al,i)} - Q,\quad \text{where }\, Q = (Q_1,\ldots,Q_q).
$$

Suppose that $(\al,i) \in \cN$. Then $(\al +\al_i,j_i) \in \De_h$, for a unique $h=h(\al,i)<i$, and
$x^\al x^{(\al_i,j_i)} = x^{\be(\al,i)} x^{(\al_h,j_h)}$, where $\be(\al,i) \in \Box_h$ (of course, $j_h=j_i$). We can rewrite
\eqref{eq:rel1} as
\begin{equation*}
x^\al \Psi_i = x^{\be(\al,i)} \Psi_{h(\al,i)} + \sum_{h=1}^q Q'_h \Psi_h,
\end{equation*}
where
\begin{equation*}
Q'_h = \begin{cases}
Q_h, &h\neq h(\al,i)\\
Q_{h(\al,i)}-x^{\be(\al,i)}, &h = h(\al,i).
\end{cases}
\end{equation*}
By Theorem \ref{thm:formaldiv},
\begin{equation}\label{eq:rel2}
\exp x^\al \Psi_i = \exp x^{\be(\al,i)} \Psi_{h(\al,i)} < \exp Q'_h \Psi_h,\quad h=1,\ldots,q.
\end{equation}

Consider the total ordering of $\IN^n\times\{1,\ldots,q\}$ given by $(\al, i) \mapsto
\lex (L(\al)+L(\al_i), \al+\al_i, -i)$. Clearly, $(\al,j) + \be > (\al,j)$ with this order, if $\be \in \IN^n\setminus\{0\}$;
cf. Remark \ref{rem:order}.

If $(\al,i) \in \cN$, then $\mon P^\al_i = x^{(\al,i)}$, by \eqref{eq:rel2}; therefore, $\cN \subset \cN(\Rel)$
(where $\mon$ and $\cN(\Rel)$ are defined with respect to the preceding order).
To get the opposite inclusion, consider $H = (H_1,\ldots,H_q) \in \Rel$ and divide $H$ by $P_1,\ldots,P_s$
according to the formal division algorithm:
$$
H = \sum_{k=1}^s \xi_kP_k + T,\quad \text{where } T=(T_1,\ldots,T_q),\, \supp T_i \subset \Box_i.
$$
Since $\sum T_i\Psi_i = 0$ and each $\supp T_i \subset \Box_i$, it follows that $T=0$, by uniqueness
in the division algorithm; hence $\cN(\Rel) \subset \cN$. 

Therefore, $\cN = \cN(\Rel)$, and $P_1,\ldots,P_s$ is the standard basis of $\Rel$.
\end{proof}

\section{Quasianalytic classes}\label{sec:quasian}
We consider a class of functions $\cQ$ given by the association, to every 
open subset $U\subset \IR^n$, of a subalgebra $\cQ(U)$ of $\cC^\infty (U)$ containing
the restrictions to $U$ of polynomial functions on $\IR^n$, and closed under composition 
with a $\cQ$-mapping (i.e., a mapping whose components belong to $\cQ$). 

Such a class determines a sheaf of local $\IR$-algebras of $\cC^\infty$ functions on $\IR^n$,
for each $n$, that we also denote $\cQ$.

\begin{definition}[quasianalytic classes]\label{def:quasian}
We say that $\cQ$ is \emph{quasianalytic} if it satisfies the following three axioms:

\begin{enumerate}
\item \emph{Closure under division by a coordinate.} If $f \in \cQ(U)$ and
$$
f(x_1,\dots, x_{i-1}, a, x_{i+1},\ldots, x_n) = 0,
$$
where $a \in \IR$,  then $f(x) = (x_i - a)h(x),$ where $h \in \cQ(U)$.

\smallskip
\item \emph{Closure under inverse.} Let $\varphi : U \to V$
denote a $\cQ$-mapping between open subsets $U$, $V$ of $\IR^n$.
Let $a \in  U$ and suppose that the Jacobian matrix
$$
\frac{\partial \varphi}{\partial x} (a) := \frac{\partial
(\varphi_1,\ldots, \varphi_n) }{\partial (x_1,\ldots, x_n)}(a)
$$
is invertible. Then there are neighbourhoods $U'$ of $a$ and $V'$ of 
$b := \varphi(a)$, and a $\cQ$-mapping  $\psi: V' \to U'$ such that
$\psi(b) = a$ and $\psi\circ \varphi$  is the identity mapping of
$U '$.

\smallskip
\item \emph{Quasianalyticity.} If $f \in \cQ(U)$ has Taylor expansion zero
at $a \in U$, then $f$ is identically zero near $a$.
\end{enumerate}
\end{definition}

\begin{remarks}\label{rem:axioms} (1)\, Axiom \ref{def:quasian}(1) implies that, 
if $f \in \cQ(U)$, then all partial derivatives of $f$ belong to $\cQ(U)$. 

\smallskip\noindent
(2)\, Axiom \ref{def:quasian}(2) is equivalent to the property that the implicit function theorem holds for functions of 
class $\cQ$.  It implies that the reciprocal of a nonvanishing function of class $\cQ$ is also of class $\cQ$.
\end{remarks} 

The elements of a quasianalytic class $\cQ$ will be called \emph{quasianalytic functions}. 
A category of manifolds and mappings of class $\cQ$ can be defined in a standard way. The category 
of $\cQ$-manifolds is closed under blowing up with centre a $\cQ$-submanifold \cite{BMselecta}.

Resolution of singularities holds in a quasianalytic class $\cQ$ \cite{BMinv}, \cite{BMselecta}. 
Resolution of singularities of an ideal generated by functions of class $\cQ$ can be used to show that sets
defined using such functions enjoy many of the important geometric properities of
real-analytic or subanalytic sets (see \S\ref{subsec:geom} below). In particular, a quasianalytic 
class $\cQ$ determines a polynomially-bounded $o$-minimal structure \cite{RSW}.

\subsection{Quasianalytic Denjoy-Carleman classes}\label{subsec:DC}

\begin{definition}[Denjoy-Carleman classes]\label{def:DC}
Let $M = (M_k)_{k\in \IN}$ denote a sequence of positive real numbers which is \emph{logarithmically
convex}; i.e., the sequence $(M_{k+1} / M_k)$ is nondecreasing. A \emph{Denjoy-Carleman
class} $\cQ = \cQ_M$ is a class of $\cC^\infty$ functions determined by the following condition: A function 
$f \in \cC^\infty(U)$ (where $U$ is open in $\IR^n$) is of class $\cQ_M$ if, for every compact subset $K$ of $U$,
there exist constants $A,\,B > 0$ such that
\begin{equation}\label{eq:DC}
\left|\frac{\p^{|\al|}f}{\p x^{\al}}\right| \leq A B^{|\al|} \al! M_{|\al|}
\end{equation}
on $K$, for every $\al \in \IN^n$.
\end{definition}

The logarithmic convexity assumption implies that
$M_jM_k \leq M_0M_{j+k}$, for all $j,k$, and that
the sequence $(M_k^{1/k})$ is nondecreasing.
The first of these conditions guarantees that $\cQ_M(U)$ is a ring, and 
the second that $\cQ_M(U)$ contains the ring $\cO(U)$ of real-analytic functions on $U$,
for every open $U\subset \IR^n$.
(If $M_k=1$, for all $k$, then $\cQ_M = \cO$.)

A Denjoy-Carleman class $\cQ_M$ is a quasianalytic class in the sense of Definition \ref{def:quasian}
if the sequence
$M = (M_k)_{k\in \IN}$ satisfies the following two assumptions in addition to those
of Definition \ref{def:DC}. 
\begin{enumerate}
\item $\displaystyle{\sup \left(\frac{M_{k+1}}{M_k}\right)^{1/k} < \infty}$.

\smallskip
\item $\displaystyle{\sum_{k=0}^\infty\frac{M_k}{(k+1)M_{k+1}} = \infty}$.
\end{enumerate}

It is easy to see that the assumption (1) implies that $\cQ_M$ is closed under differentiation.
The converse of this statement is due to S. Mandelbrojt \cite{Mandel}. In a Denjoy-Carleman class
$\cQ_M$, closure under differentiation is equivalent to the axiom \ref{def:quasian}(1) of closure under division by a
coordinate---the converse of Remark \ref{rem:axioms}(1) is a simple consequence of the fundamental
theorem of calculus; see \cite[\S3.1]{BBB}.

According to the Denjoy-Carleman theorem, the class $\cQ_M$ is quasianalytic (axiom \ref{def:quasian}(3)) if and only if the assumption (2) holds \cite[Thm.\,1.3.8]{Hormbook}.

Closure of the class $\cQ_M$ under composition is due to Roumieu \cite{Rou} and closure under
inverse to Komatsu \cite{Kom}; see \cite{BMselecta} for simple proofs. The assumptions (1)--(3)
above thus guarantee that $\cQ_M$ is a quasianalytic class.

\subsection{Geometry of quasianalytic classes}\label{subsec:geom} 
Let $\cQ$ denote a quasianalytic class, and let $U$ be an open subset of $\IR^n$. A closed subset
$X$ of $U$ will be called \emph{quasianalytic} (of class $\cQ$) if every point of $X$ admits a neighbourhood
$V$ in $U$ such that $X\cap V$ is the zero set of a finite family of elements of $\cQ(V)$ (we will also call
$X$ a closed \emph{$\cQ$-subset} of $U$). A point $a\in X$ will be called a \emph{smooth point} of $X$ if $X$ is 
a manifold of class $\cQ$ in some neighbourhood of $a$. Let $a\in U$. A \emph{germ of a closed $\cQ$-set} at a point
$a \in U$ means a germ at $a$ of a closed $\cQ$-subset of some neighbourhood
of $a$. 

The following two lemmas are consequences of resolution of singularities or the techniques
involved \cite{BMselecta}.

\begin{lemma}[{topological Noetherianity \cite[Thm.\,6.1]{BMselecta}}]\label{lem:noeth}
Let $a\in U$. Then any decreasing sequence $X_1 \supset X_2 \supset\cdots$ of germs of closed $\cQ$-sets at $a$
stabilizes (i.e., there exists $k$ such that $X_j=X_k$, for all $j\geq k$).
\end{lemma}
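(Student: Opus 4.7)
The plan is to pass from germs of $\cQ$-sets to ideals in the Noetherian formal power series ring $\cF_a$, and then use resolution of singularities to transfer stabilization of formal ideals back to stabilization of germs.

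For each $j$, let $\cI_a(X_j) \subset \cQ_a$ denote the ideal of germs of $\cQ$-functions vanishing on $X_j$, and let $\hat{\cI}_j \subset \cF_a$ be the ideal generated by the Taylor expansions at $a$ of elements of $\cI_a(X_j)$. The inclusions $X_1 \supset X_2 \supset \cdots$ give $\cI_a(X_1) \subset \cI_a(X_2) \subset \cdots$, and hence an increasing sequence $\hat{\cI}_1 \subset \hat{\cI}_2 \subset \cdots$ in $\cF_a$. Since $\cF_a$ is Noetherian, there exists $k$ with $\hat{\cI}_j = \hat{\cI}_k$ for all $j \geq k$. It then suffices to promote this formal equality to the geometric equality $X_j = X_k$ as germs.

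For this step I would apply resolution of singularities in the quasianalytic category to a representative of $X_k$: there is a proper $\cQ$-morphism $\sigma : M \to V$, with $V$ a neighborhood of $a$, such that $\sigma^{-1}(X_k)$ is a simple normal crossings divisor $E$ on $M$, and $\sigma$ is an isomorphism outside $\Sing X_k$. Each preimage $\sigma^{-1}(X_j) \subset E$ is a closed $\cQ$-subset. Near each point of the compact fiber $\sigma^{-1}(a) \cap E$, local coordinates adapted to $E$ express the pullbacks of elements of $\cI_a(X_j)$ in a form where the zero set is determined by exponents along the components of $E$. Crucially, these exponents depend only on the Taylor series of the functions at $a$, hence only on $\hat{\cI}_j$. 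So for $j \geq k$, the preimages $\sigma^{-1}(X_j)$ and $\sigma^{-1}(X_k)$ coincide near $\sigma^{-1}(a)$, and by properness and surjectivity of $\sigma$ onto $X_k$, the equality $X_j = \sigma(\sigma^{-1}(X_j)) = \sigma(\sigma^{-1}(X_k)) = X_k$ holds as germs at $a$.

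The main obstacle is precisely the implication \emph{equality of formal ideals forces equality of germs}. In the real-analytic case ($\cQ = \cO$) it is automatic from flatness of $\cO_a$ over $\cF_a$ and coherence, but both tools are unavailable (or unknown) for a general quasianalytic class. Resolution of singularities is the substitute: after monomialization the Taylor data on the base and the geometric data on the resolution coincide, reducing the assertion to combinatorics of exponents along finitely many smooth divisors. A subsidiary technical point is that by Noetherianity a finite set of generators of $\hat{\cI}_k$ suffices, so that a single resolution handles the entire tail of the chain, making the local monomialization step uniform in $j$.
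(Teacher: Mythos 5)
The paper does not prove Lemma~\ref{lem:noeth}; it is cited from \cite[Thm.\,6.1]{BMselecta}, so there is no internal proof to compare against. Your overall strategy---reduce to Noetherianity of $\cF_a$ via formal ideals, then use resolution to promote formal equality to geometric equality---is indeed the right kind of argument, and closely resembles what the cited reference does. However, the crucial step, which you yourself flag as the main obstacle, is not actually carried out, and the reasoning you offer for it contains misstatements.

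First, the resolution you invoke does not ``express the pullbacks of elements of $\cI_a(X_j)$ in a form where the zero set is determined by exponents along $E$.'' Principalizing the defining ideal of $X_k$ controls only the generators of that ideal; a general $f\in\cI_a(X_j)$ pulls back to a function that is (at best) \emph{divisible} by the monomial $M$ generating $\sigma^{*}\cI_a(X_k)$, but is by no means a monomial times a unit, so its zero set is not read off from exponents. Second, what the argument actually needs, and what you omit, is the axiom of closure under division by a coordinate (Definition~\ref{def:quasian}(1)): one shows that $\hat f_a\in\widehat{\cI}_k=(\hat g_{1,a},\ldots,\hat g_{m,a})\cF_a$ implies the formal divisibility $\hat M_b\mid\widehat{\sigma^{*}f}_b$ at each $b\in\sigma^{-1}(a)$, and then \emph{uses the division axiom plus quasianalyticity} to upgrade this to genuine divisibility $\sigma^{*}f=M\cdot h$ in $\cQ_b$. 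Only then does $\sigma^{*}f$ vanish on $Z(M)$, and hence $f$ vanish on $X_k$. Third, the ``subsidiary technical point'' about finite generation is in fact essential and needs care: one must choose $g_1,\ldots,g_m\in\cI_a(X_k)$ whose Taylor series generate $\widehat{\cI}_k$ (possible because $\cF_a$ is Noetherian), and principalize the quasianalytic ideal $(g_1,\ldots,g_m)$; the resulting set $X':=Z(g_1,\ldots,g_m)$ may strictly contain $X_k$, and the conclusion $X_j=X_k$ is obtained by sandwiching $X_j\subset X_k\subset X'\subset Z(\cI_a(X_j))=X_j$. As written, your proof jumps over precisely the point where quasianalyticity enters, which is what distinguishes this lemma from a formal triviality.
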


\begin{lemma}\label{lem:smooth}
Let $X$ denote a closed $\cQ$-subset of $U$, and let $a\in X$. Then there is a neighbourhood
$V$ of $a$ in $U$ and a finite filtration
$$
X\cap V = X_0 \supset X_1 \supset \cdots \supset X_{t+1} = \emptyset,
$$
where, for each $k=0,\ldots,t$, $X_k$ is a closed $\cQ$-subset of $V$ and
$X_k\backslash X_{k+1}$ is smooth.
\end{lemma}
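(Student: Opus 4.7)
The plan is to construct the filtration inductively, using resolution of singularities at each step to peel off a smooth open stratum. Set $X_0 := X \cap V$ for a neighbourhood $V$ of $a$ to be shrunk as needed. For the inductive step, starting with a closed $\cQ$-subset $X_k \subset V$, I will produce a closed $\cQ$-subset $X_{k+1} \subset X_k$ with $X_k \setminus X_{k+1}$ smooth and with $X_{k+1} \subsetneq X_k$ as germs at $a$ whenever $X_k$ is nonempty near $a$. Granting this, topological Noetherianity (Lemma \ref{lem:noeth}) forces the strictly decreasing chain of germs $X_0 \supsetneq X_1 \supsetneq \cdots$ to terminate at $\emptyset$ after finitely many steps $t$, yielding the desired filtration.

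To produce $X_{k+1}$, I apply resolution of singularities for ideals of class $\cQ$ \cite{BMselecta} to the ideal $\cI = (f_1,\ldots,f_m)$ generated by $\cQ$-functions $f_1,\ldots,f_m \in \cQ(V)$ defining $X_k$ near $a$. After possibly shrinking $V$, this yields a proper $\cQ$-mapping $\pi \colon \widetilde V \to V$, a finite composition of blow-ups with smooth $\cQ$-centres, such that $\widetilde V$ is a $\cQ$-manifold and $\pi^{-1}(X_k)$ is a simple normal crossings divisor $E = E_1 \cup \cdots \cup E_N$ with smooth $\cQ$-components $E_i$. I then take $X_{k+1}$ to be the union of the $\pi$-images of: the pairwise intersections $E_i \cap E_j$ for $i \ne j$; for each $i$, the locus on $E_i$ where $\pi|_{E_i}$ fails to be an immersion; and the set of points of $E$ at which $\pi$ is not injective (detected, for instance, as the image under one of the projections of a suitable closed $\cQ$-subset of $\widetilde V \times_V \widetilde V$). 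A point $b \in X_k \setminus X_{k+1}$ then has a single preimage $\tilde b$ lying on a single component $E_i$, near which $\pi|_{E_i}$ is a $\cQ$-diffeomorphism onto $X_k$; hence $b$ is a smooth point of $X_k$. Strict containment as germs at $a$ holds (when $X_k$ is nonempty near $a$) because the projection of the top-dimensional stratum of $E$ supplies smooth points of $X_k$ arbitrarily close to $a$.

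The main obstacle is verifying that $X_{k+1}$ is itself a closed $\cQ$-subset of $V$: without Noetherianity of the local rings $\cQ_a$ one cannot use the classical Jacobian-ideal description of the singular locus, and must instead rely on the fact that proper $\cQ$-images of closed $\cQ$-sets are again closed $\cQ$-sets. This property is known for $\cQ$ as a consequence of resolution of singularities together with the $o$-minimality of the structure generated by $\cQ$-definable sets \cite{RSW}. Once this is in hand, Lemma \ref{lem:noeth} closes the loop and forces the filtration to terminate after finitely many steps, producing the required $X\cap V = X_0\supset X_1\supset\cdots\supset X_{t+1} = \emptyset$ with each $X_k\setminus X_{k+1}$ smooth by construction.
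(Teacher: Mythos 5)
Your overall plan agrees with the paper's: peel off a smooth open stratum at each step, then invoke topological Noetherianity (Lemma \ref{lem:noeth}) to force the decreasing chain of germs to terminate. The difference lies in how the proper closed $\cQ$-subset $X_{k+1}\subset X_k$ is produced, and it is there that your argument has genuine gaps.

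First, and most seriously, you define $X_{k+1}$ as a union of $\pi$-images of closed $\cQ$-subsets of the blown-up space $\widetilde V$, and to conclude that $X_{k+1}$ is again a closed $\cQ$-subset of $V$ you appeal to the claim that ``proper $\cQ$-images of closed $\cQ$-sets are again closed $\cQ$-sets,'' citing the $o$-minimality of the structure generated by $\cQ$. But $o$-minimality (and resolution of singularities) only gives that such images are \emph{definable} in the generated structure — the analogue of ``sub-quasianalytic.'' It does not give that they are zero sets of finitely many quasianalytic functions, which is what ``closed $\cQ$-subset'' means in this paper. In the analytic case ($\cQ=\cO$), the corresponding Remmert-type theorem is proved using coherence and Noetherianity of $\cO_a$, precisely the properties the paper is avoiding. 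The paper's own proof sidesteps this entirely: it never takes a proper image. Instead it takes $Y$ to be the non-minimum locus of the desingularization invariant $\inv_{\cI}$ on $X\cap V$, which is a closed $\cQ$-subset \emph{by upper-semicontinuity of $\inv_{\cI}$ in the $\cQ$-Zariski topology}; $Y$ is cut out directly on $V$ by (polynomials in) derivatives of $f_1,\ldots,f_q$, as explained in Remark \ref{rem:smooth}, so no image construction is needed.

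Second, even if the image issue were resolved, the geometric picture has a dimension mismatch. Principalization of the ideal $\cI=(f_1,\ldots,f_m)$ makes the total transform $\pi^{-1}(X_k)$ a simple normal crossings divisor, hence a hypersurface of codimension one in $\widetilde V$. But $X_k$ may have codimension greater than one, in which case $\pi|_{E_i}$ can never be an immersion onto $X_k$, let alone a $\cQ$-diffeomorphism: your non-immersion locus would then be all of $E_i$, and the construction would give $X_{k+1}=X_k$, so the filtration would not strictly decrease. To carry out a blow-up argument of the kind you envision in higher codimension you would need to work with a strict transform that is a $\cQ$-submanifold together with additional care about components and dimensions, and you would again be confronted with the image problem above. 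Both difficulties are exactly what the paper's use of the invariant $\inv_{\cI}$ is designed to avoid.
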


\begin{proof}
By Lemma \ref{lem:noeth}, it is enough to find a (relatively compact) neighbourhood $V$ of $a$ and a closed
$\cQ$-subset $Y$ of $X\cap V$ such that $(X\cap V)\backslash Y$ is smooth. We can choose $V$ so
that $X\cap V$ is the zero set of a finite collection of quasianalytic functions $f_1,\ldots,f_q \in \cQ(V)$.
Let $\cI$ denote the sheaf of ideals of $\cQ|_V$ generated by $f_1,\ldots,f_q$. The assertion of the lemma
follows from properties of the desingularization invariant $\inv_{\cI}$, which determines an algorithm 
for resolution of singularities of $\cI$ \cite{BMfunct}: The invariant $\inv_{\cI}$ is upper-semicontinuous
with respect to the $\cQ$-Zariski topology (i.e., the topology whose closed sets are the closed
$\cQ$-sets), and the minimum points of $\inv_{\cI}$ on $X\cap V$ are smooth points. 
\end{proof}

\begin{remark}\label{rem:smooth}
The proof above requires only properties of $\inv_{\cI}$ in ``year zero'' (i.e., before we start
blowing up), which is much simpler than the invariant defined over a sequence of blowings-up,
in general.
It is a good exercise to unwind the construction of $\inv_{\cI}$ (as presented, for example, in
\cite{BMfunct}) to describe the subset $Y$ explicitly as a finite union of closed $\cQ$-subsets, 
each obtained as the zero set of finitely many functions that are polynomials in the derivatives 
of $f_1,\ldots,f_q$.
\end{remark}

\begin{corollary}\label{cor:smooth1}
The set of smooth points of a closed $\cQ$-subset $X$ of $U$ is dense in $X$.
\end{corollary}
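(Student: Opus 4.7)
\noindent The approach is to apply Lemma~\ref{lem:smooth} locally at an arbitrary point and induct on the length of the resulting filtration. Fix $a \in X$ and any open neighbourhood $W$ of $a$ in $U$; it suffices to produce a smooth point of $X$ inside $W$. Applying Lemma~\ref{lem:smooth} at $a$ yields an open neighbourhood $V \subset W$ of $a$ together with a filtration
$$X \cap V = X_0 \supset X_1 \supset \cdots \supset X_{t+1} = \emptyset,$$
in which each $X_k$ is a closed $\cQ$-subset of $V$ and each $X_k \setminus X_{k+1}$ is a $\cQ$-submanifold of $V$.

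\noindent I would then prove by induction on $t \geq 0$ the following stronger local statement: if $Y$ is a closed $\cQ$-subset of an open $V' \subset U$ admitting a filtration $Y = Y_0 \supset Y_1 \supset \cdots \supset Y_{t+1} = \emptyset$ of the above form, then the smooth points of $Y$ are dense in $Y$. When $t=0$, $Y$ itself equals $Y_0 \setminus Y_1$ and is a $\cQ$-submanifold, so every point of $Y$ is smooth. For the inductive step with $t \geq 1$, fix $b \in Y$ and any open $W' \subset V'$ containing $b$. If $W' \cap (Y_0 \setminus Y_1) \neq \emptyset$, then any point of this intersection is smooth, since $Y_0 \setminus Y_1$ is an open subset of $Y$ that is a $\cQ$-submanifold of $V'$. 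Otherwise $W' \cap Y \subset Y_1$, so $Y$ and $Y_1$ coincide throughout $W'$; the inductive hypothesis applied to $Y_1 \cap W'$ with its inherited filtration of length $t - 1$ produces a smooth point of $Y_1$ in $W'$, which is then a smooth point of $Y$ because $Y = Y_1$ in $W'$.

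\noindent The key subtlety the induction must address is that a smooth point of a stratum $Y_k \setminus Y_{k+1}$ is in general only a smooth point of $Y_k$, not of the whole $Y$. The dichotomy in the inductive step circumvents this: one either locates a smooth point on the top stratum (where $Y = Y_0$ tautologically), or $Y$ collapses locally onto $Y_1$, strictly reducing the filtration length. Finiteness of the filtration, an expression of topological Noetherianity (Lemma~\ref{lem:noeth}), guarantees that this process terminates.
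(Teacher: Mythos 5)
Your proof is correct and rests on the same ingredient as the paper's, namely the local filtration supplied by Lemma~\ref{lem:smooth}, but it is organized differently. The paper's proof is a one-liner: with $\Om_k := X_k\setminus X_{k+1}$, it asserts that $\Om_0 \cup \bigcup_{k=1}^t \bigl(\Om_k\setminus \overline{\Om}_{k-1}\bigr)$ is a dense open set of smooth points of $X\cap V$. You instead run an induction on the length $t$ of the filtration, arguing at each point that either the top stratum $\Om_0$ meets the given neighbourhood (in which case any such point is a smooth point of $X$, since $\Om_0$ is open in $X$) or $X$ coincides locally with $X_1$, reducing to the shorter filtration of $X_1$. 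Your dichotomy makes fully explicit the point you correctly flag as the crux: a smooth point of a stratum $X_k\setminus X_{k+1}$ is only a smooth point of $X$ because $X$ collapses onto $X_k$ near that point. The paper's displayed set is terse on exactly this step --- removing only $\overline{\Om}_{k-1}$ from $\Om_k$ shows that $X_{k-1}=X_k$ locally, while one really wants $\bigcup_{l<k}\overline{\Om}_l$ removed to conclude $X=X_k$ locally --- whereas your inductive reduction handles it cleanly and unambiguously. Both arguments ultimately rely on finiteness of the filtration, which you correctly trace back to topological Noetherianity (Lemma~\ref{lem:noeth}).
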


\begin{proof}
Using the notation of Lemma \ref{lem:smooth}, let $\Om_k := X_k\backslash X_{k+1}$,
$k=0,\ldots,t$. Then $\Om_0 \cup \bigcup_{k=1}^t \Om_k\backslash \overline{\Om}_{k-1}$ is a dense
open subset of $X\cap V$ consisting of smooth points.
\end{proof}

\begin{lemma}\label{lem:smooth2}
Let $X$ be a closed $\cQ$-subset of $U$, and let $a\in X$. Then there is a neighbourhood
$V$ of $a$ in $U$ and a dense open subset $\Om$ of $X\cap V$ such that $\Om$ is smooth and
has only finitely many connected components, each adherent to $a$.
\end{lemma}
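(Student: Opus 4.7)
The plan is to use resolution of singularities to reduce the statement to the case of the complement of a normal crossings divisor in a smooth $\cQ$-manifold, where compactness of the fibre over $a$ bounds the number of local components. Concretely, apply embedded resolution of singularities \cite{BMselecta} to the closed $\cQ$-set $X \cap V$ on a small neighbourhood $V$ of $a$ to produce a proper $\cQ$-morphism $\pi : \tX \to X \cap V$, where $\tX$ is a smooth $\cQ$-manifold (realised as the strict transform under a composition of blowings-up of $V$ with smooth $\cQ$-centres), and $\pi$ restricts to a diffeomorphism $\tX \setminus D \to \Om_0$, with $D \subset \tX$ a normal crossings $\cQ$-divisor and $\Om_0 \subset X \cap V$ open and dense.

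The fibre $K := \pi^{-1}(a)$ is compact. I would cover $K$ by finitely many $\cQ$-charts $U_1, \ldots, U_N$ of $\tX$ centred at points $p_i \in K$, in each of which $D$ takes the form $\{x_1 \cdots x_{k_i} = 0\}$; then $(\tX \setminus D) \cap U_i$ is the disjoint union of $2^{k_i}$ open signed \emph{orthants}, each adherent to $p_i$. Setting $W := \bigcup_i U_i$, an open neighbourhood of $K$, it follows that $(\tX \setminus D) \cap W$ has only finitely many connected components, each adherent to some point of $K$. Properness of $\pi$ ensures that $V' := V \setminus \pi(\tX \setminus W)$ is an open neighbourhood of $a$ with $\pi^{-1}(V') \subset W$.

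Put $\Om := \Om_0 \cap V'$. Openness, density, and smoothness of $\Om$ in $X \cap V'$ are immediate. Via the diffeomorphism $\pi|_{\tX \setminus D}$, the connected components of $\Om$ correspond bijectively to the connected components of $(\tX \setminus D) \cap \pi^{-1}(V')$, an open subset of $(\tX \setminus D) \cap W$, and every such component maps to a set adherent to $a = \pi(p_i)$ for the appropriate $i$. The main obstacle will be ensuring that the passage from $W$ to the smaller set $\pi^{-1}(V')$ does not split the finitely many orthant components of $(\tX \setminus D) \cap W$ into infinitely many pieces; I would handle this by shrinking $V'$ along a nested basis and exploiting the explicit monomial form of $\pi$ in the blow-up charts $U_i$ (so that the $\pi$-preimage of a sufficiently small ball around $a$ meets each orthant in a connected set adherent to $p_i$), supplemented if needed by a curve-selection argument using the tameness of the $\cQ$-morphism.
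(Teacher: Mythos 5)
Your overall plan -- desingularize, pass to a compact fibre $\pi^{-1}(a)$, read off orthants of a normal crossings complement in finitely many charts, and push forward -- follows one of the three routes the paper's proof cites (resolution of singularities \cite[Thm.~5.9]{BMselecta}); the paper also offers the rectilinearization theorem and the $o$-minimality of $\cQ$ as alternatives, and does not spell out details. Your proposal is considerably more explicit than the paper's citation, but it contains a genuine gap which you correctly flag at the end and do not close.

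The gap is the final shrinking step. You want: for $V'$ small, each orthant component $O$ of $(\tX\setminus D)\cap W$ meets $\pi^{-1}(V')$ in a set that is connected and still adherent to $K=\pi^{-1}(a)$. Neither assertion follows from the ``explicit monomial form'' of $\pi$ in a chart alone. The preimage $\pi^{-1}(B(a,\epsilon))$ is cut out by an inequality in monomials with unit coefficients, and its intersection with an orthant need not be convex or obviously connected; worse, when $K$ is positive-dimensional, $\overline O\cap K$ may itself be disconnected, in which case $O\cap\pi^{-1}(B(a,\epsilon))$ can legitimately break into several pieces for small $\epsilon$, and one must still rule out (a) infinitely many pieces and (b) pieces that are bounded away from $K$. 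Curve selection gives adherence of pieces that do touch $K$, but it does not by itself give finiteness, nor exclude stray components that ``die'' before reaching $K$. What actually finishes the argument is the $o$-minimal local conic structure (equivalently, definable/Hardt triviality of the family $\epsilon\mapsto O\cap\pi^{-1}(B(a,\epsilon))$ for small $\epsilon$) -- a tool whose applicability to $\cQ$-sets is exactly the $o$-minimality input \cite{RSW} the paper cites as the third route. But once one invokes local conic structure, it is cleaner to apply it directly to the dense open smooth locus $\Om_0\subset X$ at the point $a\in\overline{\Om_0}$: for small $\epsilon$, $\Om_0\cap B(a,\epsilon)$ is a cone with vertex $a$ over $\Om_0\cap S(a,\epsilon)$, hence has finitely many components each adherent to $a$, with no resolution needed. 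So the resolution machinery in your write-up is doing less work than it appears to, and the real content is the local conic/triviality step that you leave as ``supplemented if needed.''

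Two smaller points. First, for a closed $\cQ$-set $X$ that is not a hypersurface, ``embedded resolution producing a strict transform $\tX$ smooth and mapping onto $X$'' is not quite the right statement; you want the uniformization theorem (a proper $\cQ$-map from a smooth manifold of dimension $\dim X$ onto $X$, a diffeomorphism off a nowhere dense $\cQ$-subset), which is a consequence of resolution but needs to be stated as such, so that $\Om_0$ is indeed dense in $X\cap V$. Second, density of $\Om_0$ also uses that the exceptional locus has dimension $<\dim X$ on each component; this is automatic once uniformization is set up correctly, but it is not automatic from ``$\pi$ is a diffeomorphism off $D$'' alone.
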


\begin{proof}
This is a consequence of resolution of singularities \cite[Thm.\,5.9]{BMselecta} or the simpler
rectilinearization theorem \cite[Cor.\,5.13]{BMselecta} (or, alternatively, of the fact that $\cQ$
determines a polynomially-bounded $o$-minimal structure \cite{RSW}).
\end{proof}

Let $a\in U$. A germ of a closed $\cQ$-set at $a$ is \emph{irreducible} if it cannot be written as the union of two proper subgerms of closed $\cQ$-sets.
It follows from Lemma \ref{lem:noeth} that any germ of a closed $\cQ$-set has finitely many irreducible
components (in the same sense as for $\cQ =\cO$). 

Let $X$ denote a closed $\cQ$-subset $X$. We let $\cQ(X)$ denote the ring of restrictions to 
$X$ of quasianalytic functions defined in neighbourhoods of $X$. If $Z$ is a closed $\cQ$-subset
of $X$, let $\cQ(X,Z)$ denote the ring of quotients of elements of $\cQ(X)$ with denominators
vanishing nowhere in $X\backslash Z$. The following is a consequence of Lemma \ref{lem:smooth2}.

\begin{corollary}\label{cor:irred}
Let $Z\subset X$ denote closed $\cQ$-subsets of $U$ (perhaps $Z=\emptyset$), and let $a\in X$.
Suppose that the germ of $X$ at $a$ is irreducible. Then, by shrinking $U$ to an appropriate
neighbourhood of $a$, we can assume that $\cQ(X,Z)$ is an integral domain.
\end{corollary}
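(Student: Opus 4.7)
The plan is to reduce to showing that $\cQ(X)$ itself is an integral domain (after an appropriate shrinking of $U$), since $\cQ(X,Z)$ is then the localization of an integral domain at a multiplicative set of denominators, and hence an integral domain itself.

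To prove that $\cQ(X)$ is an integral domain, I would first apply Lemma \ref{lem:smooth2} to shrink $U$ to a neighbourhood in which $X$ admits a dense open smooth subset $\Om = \Om_1\sqcup\cdots\sqcup\Om_s$, with each connected component $\Om_i$ adherent to $a$. Given $f_1,f_2\in\cQ(X)$ with $f_1f_2=0$ on $X$, restriction to each connected $\cQ$-manifold $\Om_i$ combined with the identity principle for quasianalytic functions on a connected manifold (the set where all derivatives vanish is open by quasianalyticity and closed by continuity of derivatives) yields $f_1|_{\Om_i}=0$ or $f_2|_{\Om_i}=0$ for each $i$. Setting $Y_k:=\{x\in X:f_k(x)=0\}$, which are closed $\cQ$-subsets of $U$, this gives $\Om\subseteq Y_1\cup Y_2$, and hence $X = \overline{\Om} = Y_1\cup Y_2$.

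Now I invoke irreducibility of the germ $X_a$: one of $Y_{1,a}, Y_{2,a}$ coincides with $X_a$, say $Y_{1,a}=X_a$. After further shrinking to some neighbourhood $U'\ni a$, this gives $f_1=0$ on $X\cap U'$. The final step is to recover vanishing of $f_1$ on all of $X$ in the originally-shrunk $U$: since each $\Om_i$ is adherent to $a$, the intersection $\Om_i\cap U'$ is a nonempty open subset of the connected manifold $\Om_i$ on which $f_1$ vanishes, so the identity principle again gives $f_1=0$ on all of $\Om_i$; thus $f_1=0$ on $\Om$ and, by continuity, on $X=\overline{\Om}$.

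The main obstacle I anticipate is the tension between the shrinking forced by irreducibility---which a priori only yields vanishing on a potentially much smaller neighbourhood $U'$---and the desired conclusion on all of $X$ in the originally-chosen $U$. The structure provided by Lemma \ref{lem:smooth2}, in particular that $\Om$ has only finitely many components and each is adherent to $a$, is exactly what allows this rebuilding via the identity principle on each $\Om_i$. A minor technical check is that $f_k|_{\Om_i}\in\cQ(\Om_i)$, which follows from closure of $\cQ$ under composition applied to the local $\cQ$-embeddings of $\Om_i$ in $U$.
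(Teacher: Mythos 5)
Your proof is correct and matches the route the paper intends: the paper gives no argument beyond pointing to Lemma \ref{lem:smooth2}, and your use of that lemma (finitely many connected smooth components, each adherent to $a$), together with the quasianalytic identity principle on each component, irreducibility of the germ at $a$, and the fact that a localization of a domain at a multiplicative set not containing $0$ is a domain, is exactly the fleshing-out that derivation requires. The step you flag as a potential obstacle --- propagating vanishing from a small neighbourhood of $a$ back to all of $X$ --- is handled precisely as the lemma is designed to allow, by applying the identity principle on each $\Omega_i$ using its adherence to $a$.
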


\section{Quasianalytic families of formal power series}\label{sec:param}

\subsection{Parametrized families of formal power series}\label{subsec:param}
Recall that, if $a \in \IR^n$, then $\cF_a$ denotes the ring of formal power series centred at $a$. We
write $\cF_a = \IR\llb x\rrb = \IR\llb x_1,\ldots,x_n\rrb$; i.e., the formal Taylor expansion $\hf_a$ at $a$ of a $\cC^\infty$
function $f(x_1,\ldots,x_n)$ is written $\hf_a(x) = \sum_{\al\in\IN^n}(\p^{|\al|}f/\p x^\al)(a)x^\al/\al!$.

Consider $Z\subset X \subset U$, where $U$ is an open subset of $\IR^n$ and $X,\,Z$ are closed $\cQ$-subsets of $U$.
Set $A := \cQ(X,Z)$. Let $\Phi_1,\ldots,\Phi_q \in A\llb x\rrb^p$; i.e.,
\begin{equation}\label{eq:phi}
\Phi_i (x) = \sum_{j=1}^q\sum_{\al\in\IN^n} \Phi_{i,(\al,j)} x^{(\al,j)},\quad i=1,\ldots,q,
\end{equation}
where each coefficient $\Phi_{i,(\al,j)}\in A$. For each $i=1,\ldots,q$ and $a\in X\setminus Z$, let 
$\Phi_i (a,x) \in \IR\llb x\rrb^p$ denote the power series obtained by evaluating the coefficients
of $\Phi_i$ at $a$; i.e., 
\begin{equation}\label{eq:phieval}
\Phi_i (a,x) = \sum_{j=1}^q\sum_{\al\in\IN^n} \Phi_{i,(\al,j)}(a) x^{(\al,j)}.
\end{equation}

Let $M$ denote the submodule of $A\llb x\rrb^p$ generated by $\Phi_1,\ldots,\Phi_q$, and, for each $a\in 
X\setminus Z$, let $M_a$ denote the submodule of $\IR\llb x\rrb^p$ generated by
$\Phi_1 (a,x),\ldots,\Phi_q (a,x)$. Consider the diagrams of initial exponents
$$
\cN = \cN(M),\,\, \text{and }\,\, \cN_a = \cN(M_a),\,\, a \in X\setminus Z
$$
(using the ordering of $\IN^n\times\{1,\ldots,p\}$ given by $\lex(L(\al),j,\al)$, where $L$ is a positive
linear form, as in Remark \ref{rem:order}). 

The preceding notation will be fixed throughout this section.

\begin{theorem}[semicontinuity of the diagram of initial exponents]\label{thm:param}
\hspace{1cm}
\begin{enumerate}
\item 
If $K\subset X$ is compact, then there are only finitely many values of $\cN_a \in \cD(n,p)$, 
$a \in X\setminus Z$ (see \eqref{eq:diag}).

\smallskip
\item
For each $a_0 \in X\setminus Z$, 
$$
Z \,\bigcup \,\{a \in X\setminus Z:\, \cN_a \geq \cN_{a_0}\}
$$
is a closed $\cQ$-subset of $X$.
\end{enumerate}
\end{theorem}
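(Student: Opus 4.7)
The plan is to prove both parts by constructing, via iterated application of Hironaka's formal division, a finite stratification of $X$ on each stratum of which the fibrewise diagram $\cN_a$ is constant. Since the statements are local, we work near a fixed $a_0\in X\setminus Z$, and by decomposing $X$ into its finitely many irreducible components at $a_0$, we assume $X$ is irreducible; Corollary~\ref{cor:irred} then gives that $A := \cQ(X,Z)$ is an integral domain. Corollary~\ref{cor:diag} (via Theorem~\ref{thm:formaldiv}) produces the generic diagram $\cN := \cN(M)\in\cD(n,p)$, generators $\Phi_1,\ldots,\Phi_q\in M$ with $\exp\Phi_i = (\al_i,j_i)$ the vertices of $\cN$, and a standard basis $\Psi_1,\ldots,\Psi_q$ of $S^{-1}A\llb x\rrb\cdot M$, where the multiplicative set $S\subset A$ is generated by the initial coefficients $\Phi_{i,(\al_i,j_i)}$. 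Set
\[
\widetilde{Z} := Z \cup \bigcup_{i=1}^q \{a\in X : \Phi_{i,(\al_i,j_i)}(a)=0\};
\]
this is a closed $\cQ$-subset of $X$, and since each generator of $S$ is nonzero in the integral domain $A$, $\widetilde{Z}\subsetneq X$.

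The first key step is to show $\cN_a = \cN$ for every $a\in X\setminus\widetilde{Z}$. At such $a$ each generator of $S$ is nonzero, so (after clearing denominators) $\Psi_k(a,\cdot) = x^{(\al_k,j_k)} + R_k(a,\cdot)$ is well defined with $\supp R_k(a,\cdot)$ in the complement of $\cN$; by Corollary~\ref{cor:diag}(2) these form the standard basis of the submodule $N_a\subset\IR\llb x\rrb^p$ they generate, whence $\cN(N_a)=\cN$. Each $\Phi_i$ lies in $S^{-1}A\llb x\rrb\cdot M=(\Psi_1,\ldots,\Psi_q)$ by Corollary~\ref{cor:diag}(1)(b), so $\Phi_i(a,\cdot)\in N_a$ and $M_a \subseteq N_a$; the reverse inclusion is immediate, giving $M_a = N_a$ and hence $\cN_a = \cN$.

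Iterating the construction on $\widetilde{Z}$ (with the same $Z$) produces a strictly descending chain $X \supsetneq \widetilde{Z} \supsetneq \widetilde{Z}' \supsetneq \cdots$ of closed $\cQ$-subsets, which terminates by topological Noetherianity (Lemma~\ref{lem:noeth}), yielding a finite stratification $X = X_0 \supsetneq X_1 \supsetneq \cdots \supsetneq X_k$ on each stratum $X_i\setminus X_{i+1}$ of which $\cN_a$ is constant, equal to the generic diagram $\cN^{(i)}$ computed on $X_i$. A crucial additional fact is the monotonicity $\cN^{(i+1)} \geq \cN^{(i)}$ in the $\cD(n,p)$-order: since $X_{i+1}$ is contained in the zero set of at least one initial coefficient used at step $i$, the corresponding initial exponent strictly increases upon restriction, and a lex analysis of the resulting vertex sequences (combined with the observation that cancellations among restricted generators can only raise initial exponents) yields the inequality. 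Hence $\cN_a$ attains only the finitely many values $\cN = \cN^{(0)} \leq \cN^{(1)} \leq \cdots \leq \cN^{(k)}$ on $X\setminus Z$.

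Given the stratification, part (1) is immediate. For part (2), given $\cN^* := \cN_{a_0}$, we have $\cN^* = \cN^{(i_0)}$ for some $i_0$, and $\cN_a \geq \cN^*$ iff $a \in X_{i_0}$; therefore $Z\cup\{a\in X\setminus Z : \cN_a\geq\cN^*\} = X_{i_0}$, a closed $\cQ$-subset of $X$. The main obstacle is the monotonicity $\cN^{(i+1)} \geq \cN^{(i)}$, which compares the generic diagrams of $M$ over different coefficient rings and requires verifying that cancellations in linear combinations of restricted generators cannot produce a vertex sequence lex-smaller than $\cN^{(i)}$'s---a combinatorial claim requiring careful tracking through the division algorithm.
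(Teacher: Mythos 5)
Your strategy differs from the paper's, and while the broad architecture (iterated division $+$ Noetherianity $\Rightarrow$ finite stratification) is sound, there are two genuine gaps, one of which you flag yourself and which is in fact the heart of the theorem.

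\textbf{The main gap: the semicontinuity inequality.} You need, and do not prove, the fact that $\cN := \cN(M) \leq \cN_a$ for \emph{every} $a \in X\setminus Z$ (not only off a proper closed subset). This is the paper's Lemma~\ref{lem:param1}, proved by a nontrivial induction that matches the vertices of $\cN$ and $\cN_a$ one by one, using the fact that initial coefficients vanishing at $a$ can be cancelled against a previously constructed generator $G_i$. Your proposal instead tries to reduce this to an inequality between the \emph{generic} diagrams $\cN^{(i)}$ of successive strata, which you acknowledge is ``the main obstacle.'' But a lex inequality between the generic diagrams does not follow merely from ``$X_{i+1}$ lies in the zero set of some initial coefficient,'' because when several generators' initial coefficients vanish simultaneously, a linear combination of the restricted generators can acquire a \emph{smaller} initial exponent than any single generator had -- this is precisely the cancellation phenomenon the paper's induction handles. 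Without Lemma~\ref{lem:param1}, part~(2) fails: if the $\cN^{(i)}$ are not monotone, the set $\{a : \cN_a \geq \cN_{a_0}\}$ is not a union of the tail of your filtration. (Once $\cN \leq \cN_a$ is known for all $a$, monotonicity of the $\cN^{(i)}$ and the identification of $\{\cN_a \geq \cN_{a_0}\}$ with some $X_{i'}$ are immediate, so you were right that this is the crux.)

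\textbf{A secondary gap in your ``first key step.''} You assert that for $a$ off the zero set $\widetilde{Z}$ of the generators of $S$, the specializations $\Psi_k(a,\cdot)$ ``by Corollary~\ref{cor:diag}(2) form the standard basis of the submodule they generate, whence $\cN(N_a) = \cN$.'' Corollary~\ref{cor:diag}(2) only asserts existence of a standard basis of a given module; it does not say that a standard basis remains one after specializing coefficients. A tuple of reduced power series $\Psi_k(a,\cdot) = x^{(\al_k,j_k)} + R_k(a,\cdot)$ with $\supp R_k \subset \De$ need \emph{not} generate a module with diagram $\cN$ in general: for instance $\Psi_1 = x_1^2 + x_2^3$, $\Psi_2 = x_1 x_2$ are reduced for the diagram with vertices $(2,0),(1,1)$, yet $x_2\Psi_1 - x_1\Psi_2 = x_2^4$ lies in $\De$. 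What saves your step is that the standard basis of $M$ over $S^{-1}A$ has an extra vertex (here $(0,4)$, coming from $x_2\Phi_1 - x_1\Phi_2$), and the fact that specialization of a \emph{complete} standard basis is again a standard basis is a consequence of a local Buchberger-type criterion: the division relations $x^{\ga_k}\Psi_{i_k} = \sum Q_{kj}\Psi_j$ from \S\ref{subsec:rel} specialize, forcing every $S$-pair to reduce to zero at $a$. This is true but requires an argument; it is not Corollary~\ref{cor:diag}(2). Note that the paper sidesteps this entirely: its Lemma~\ref{lem:param2} establishes $\cN_a = \cN$ generically by the much simpler observation $\cN \subset \cN_a$ (evaluating generators $G_i$ realizing the vertices) combined with the already-proved Lemma~\ref{lem:param1}.

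In short: your stratification scheme would prove part~(1) once the first-step gap is patched, but part~(2) hinges on the pointwise inequality of Lemma~\ref{lem:param1}, which your proposal identifies but does not supply. You should prove that lemma (by the vertex-matching induction), after which both Lemma~\ref{lem:param2} and the closedness in part~(2) follow cleanly.
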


Theorem \ref{thm:param} is given in \cite[Ch.\,5]{BMPisa} in the case $\cQ=\cO$. If $Z=\emptyset$,
then the conclusions of the theorem mean that $a \mapsto \cN_a \in \cD(n,p)$ is ``$\cQ$-Zariski 
upper-semicontinuous'' on $X$. Theorem \ref{thm:param} is a consequence of the following two
lemmas.

\begin{lemma}\label{lem:param1}
For all $a \in X\setminus Z$, $\cN \leq \cN_a$.
\end{lemma}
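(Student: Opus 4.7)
The plan is to prove $\cN \leq \cN_a$ by comparing vertex sequences $\cV(\cN) = (v_1, v_2, \ldots)$ and $\cV(\cN_a) = (w_1, w_2, \ldots)$ lexicographically, via induction on $k$: if $v_i = w_i$ for every $i < k$, then $v_k \leq w_k$ (where $\infty$ is treated as larger than any finite vertex). The base case is immediate: any $G \in M_a$ with $\exp G = w_1$ is of the form $\Phi(a,x)$ for some $\Phi \in M$, giving $v_1 \leq \exp \Phi \leq \exp \Phi(a,x) = w_1$.

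The inductive step rests on a preliminary construction: for each $i < k$, an element $\Phi^{(i)} \in M$ with $\exp \Phi^{(i)} = v_i$ whose initial coefficient $\Phi^{(i)}_{v_i}(a) \in \IR$ is nonzero. I would build these by a second induction on $i$, starting from any $\tilde\Phi \in M$ with $\exp \tilde\Phi(a,x) = v_i$ (which exists because $v_i = w_i \in \cN_a$) and applying, whenever $\exp \tilde\Phi = v_m + \eta$ for some $m < i$, the syzygy-style reduction
\[
\tilde\Phi \;\mapsto\; \Phi^{(m)}_{v_m}\, \tilde\Phi - \tilde\Phi_{v_m + \eta}\, x^\eta\, \Phi^{(m)}.
\]
This stays in $M$ because only $A$-multiples are involved; the leading term at $v_m + \eta$ cancels so $\exp$ strictly increases; and because $\exp \tilde\Phi < v_i = \exp \tilde\Phi(a,x)$ forces $\tilde\Phi_{v_m + \eta}(a) = 0$, the evaluation collapses to $\Phi^{(m)}_{v_m}(a)\cdot \tilde\Phi(a,x)$ with nonzero prefactor, so $\exp \tilde\Phi(a,x) = v_i$ is preserved. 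Only finitely many exponents are $\leq v_i$ in the $\lex(L(\al), j, \al)$ order, so the iteration terminates, necessarily with $\exp \tilde\Phi = v_i$. The very same reduction then closes the main induction by contradiction: assuming $w_k < v_k$ and picking $\Phi \in M$ with $\exp \Phi(a,x) = w_k$, the reduction strictly increases $\exp \Phi$ while preserving $\exp \Phi(a,x) = w_k$, so after finitely many iterations $\exp \Phi \notin \bigcup_{i < k}(v_i + \IN^n)$. But $\Phi \neq 0$ forces $\exp \Phi \in \cN$, and this element must then lie in $v_m + \IN^n$ for some $m \geq k$ (giving $\exp \Phi \geq v_k > w_k$, contradicting $\exp \Phi \leq w_k$), or $v_k = \infty$, in which case $\cN = \bigcup_{i < k}(v_i + \IN^n)$ is already exhausted and $\exp \Phi \notin \cN$ is the contradiction.

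The main obstacle is producing the $\Phi^{(i)}$ inside $M$ itself rather than in a localization $S^{-1}A\llb x \rrb^p$: the reduction formula is arranged precisely so that no denominators appear, and it works because $A$ is an integral domain --- guaranteed by shrinking $U$ around $a$ via Corollary \ref{cor:irred} so that the germ of $X$ at $a$ is irreducible --- so the pivot $\Phi^{(m)}_{v_m}$ is not a zero-divisor in $A$ and its nonzero value $\Phi^{(m)}_{v_m}(a)$ genuinely preserves the $a$-leading behavior.
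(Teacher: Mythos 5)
Your proof follows essentially the same route as the paper's: the same lift from $M_a$ to $M$, the same syzygy-style reduction $\Phi^{(m)}_{v_m}\tilde\Phi - \tilde\Phi_{v_m+\eta}\,x^\eta\,\Phi^{(m)}$ that strictly increases the formal initial exponent while leaving the evaluated initial exponent at $a$ fixed, and an induction along the vertex sequences of $\cN$ and $\cN_a$. It is correct. One small remark: you invoke Corollary \ref{cor:irred} to shrink to an irreducible germ of $X$ at $a$ and make $A$ an integral domain, but that hypothesis is not actually used here --- the reduction already avoids denominators by construction, and what keeps the reduced element nonzero (and its evaluated $\exp$ unchanged) is simply that $\Phi^{(m)}_{v_m}(a)\neq 0$ in $\IR$; there is no need for $\Phi^{(m)}_{v_m}$ to be a nonzerodivisor in $A$. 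The paper's proof of Lemma \ref{lem:param1} accordingly makes no irreducibility assumption, reserving Corollary \ref{cor:irred} for Lemma \ref{lem:param2}.
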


\begin{proof}
Let $a \in X\setminus Z$. Let $(\al_i,j_i)$, $i=1,\ldots,s$, and $(\be_i,k_i)$, $i=1,\ldots,t$, denote
the vertices of $\cN_a$ and $\cN$ (respectively), in each case indexed in increasing order.

Consider $F \in M_a$ such that $\exp F = (\al_1,j_1)$, say,
$$
F(x) = \sum_{l=1}^q c_l(x) \Phi_l(a,x),\quad \text{where each }\, c_l(x) \in \IR\llb x\rrb,
$$
and set \hspace{.85cm} $\displaystyle{G(x) := \sum_{l=1}^q c_l(x) \Phi_l(x) \in M.}$

\smallskip
\noindent
Then $\exp G \leq (\al_1,j_1)$, since the coefficient of $x^{(\al_1,j_1)}$ is nonzero.
Therefore, $(\be_1,k_1) \leq \exp G \leq (\al_1,j_1)$; in particular, if $(\be_1,k_1) = (\al_1,j_1)$,
then $\exp G = (\al_1,j_1)$.

Now suppose that, for all $i=1,\ldots,r\leq s$, $(\be_i,k_i) = (\al_i,j_i)$ (in particular, $r\leq t$)
and there exists $G_i = G_i(x) \in M$ such that $\exp G_i = (\al_i,j_i) = G_i(a,x)$. If $r=s$, we
are done. Suppose that $r<s$. Consider
$$
F(x) = \sum_{l=1}^q c_l(x) \Phi_l(a,x) \in M_a, \quad \text{where } \exp F = (\al_{r+1},j_{r+1}),
$$
and set \hspace{.16cm} $\displaystyle{G(x) := \sum_{l=1}^q c_l(x) \Phi_l(x) \in M.}$

\smallskip
\noindent
Then $\exp G \leq (\al_{r+1},j_{r+1})$. If $\exp G = (\al_{r+1},j_{r+1})$, then $r< t$ and
$(\be_{r+1},k_{r+1}) \leq \exp G = (\al_{r+1},j_{r+1})$. On the other hand, if $\exp G < (\al_{r+1},j_{r+1})$, then\\
either\, $\displaystyle{\exp G \notin \bigcup_{i=1}^r \left((\al_i,j_i) + \IN^n\right)}$, so that $r<t$ and $(\be_{r+1},k_{r+1}) 
< (\al_{r+1},j_{r+1})$,\\
\phantom{eith}or\, $\displaystyle{\exp G \in \bigcup_{i=1}^r \left((\al_i,j_i) + \IN^n\right)}$.\\
In the latter case,
$\exp G = (\al_i+\ga,j_i)$, for some $i=1,\ldots,r$ and $\ga\in \IN^n$. Then $\mon G = G_{(\al_i+\ga,j_i)}x^{(\al_i+\ga,j_i)}$,
where $G_{(\al_i+\ga,j_i)}(a) = 0$ since $\exp G < (\al_{r+1},j_{r+1}) = \exp G(a,x)$. But $\mon G_i
= G_{i,(\al_i,j_i)}x^{(\al_i,j_i)}$, where $G_{i,(\al_i,j_i)}(a)\neq0$. Let
$$
G'(x) := G_{i,(\al_i,j_i)}G(x) - G_{(\al_i+\ga,j_i)}x^\ga G_i(x).
$$
Then $\exp G'(a,x) = (\al_{r+1},j_{r+1})$ and $\exp G < \exp G' \leq (\al_{r+1},j_{r+1})$. After finitely
many such steps, the latter $\leq$ becomes $=$. The lemma therefore follows, by induction on $r$.
\end{proof}

\begin{lemma}\label{lem:param2}
Let $a_0 \in X$, and assume that the germ of $X$ at $a_0$ is irreducible. Then, after shrinking $U$ to a suitable
neighbourhood of $a_0$, there is a proper closed $\cQ$-subset $Y$ of $X$
containing $Z$, such that
\begin{enumerate}
\item  
$\cN_a = \cN$, for all $a \in X\setminus Y$;
\smallskip
\item for every vertex $(\al,j)$ of $\cN$,
there exists $G \in M$ such that
$$
\exp G = (\al,j) = \exp G(a,x), \quad \text{for all }\, a \in X\setminus Y.
$$
\end{enumerate}
\end{lemma}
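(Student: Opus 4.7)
My plan is to realize each vertex of $\cN$ by an element of $M$ and to define $Y$ as the union of $Z$ with the vanishing loci of the leading coefficients of these generators, using irreducibility of the germ of $X$ at $a_0$ to ensure $Y$ is a proper $\cQ$-subset.

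\textbf{Setup and construction of $Y$.} By Corollary \ref{cor:irred}, after shrinking $U$ I may assume $A := \cQ(X, Z)$ is an integral domain. Let $(\al_1, j_1), \ldots, (\al_s, j_s)$ be the vertices of $\cN$, and for each $i$ I choose $G^{(i)} \in M$ with $\exp G^{(i)} = (\al_i, j_i)$ (these exist by the definition of $\cN$). The leading coefficient $c_i := G^{(i)}_{(\al_i, j_i)}$ is a nonzero element of $A$; represent it as $c_i = f_i/g_i$ with $f_i, g_i \in \cQ(X)$ and $g_i$ nowhere-zero on $X \setminus Z$. I will take
$$
Y := Z \cup \bigcup_{i=1}^{s}\{a \in X : f_i(a) = 0\},
$$
a closed $\cQ$-subset of $X$ containing $Z$.

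\textbf{Verification of (1) and (2).} For $a \in X \setminus Y$, the value $c_i(a) = f_i(a)/g_i(a)$ is nonzero in $\IR$, while every coefficient of $G^{(i)}$ at a multi-index strictly below $(\al_i, j_i)$ vanishes in $A$ (by the choice $\exp G^{(i)} = (\al_i, j_i)$) and hence at $a$. Therefore $\exp G^{(i)}(a, x) = (\al_i, j_i)$, which is (2) with $G = G^{(i)}$. Every vertex of $\cN$ thus belongs to $\cN_a$, so $\cN \subseteq \cN_a$, giving $\cN_a \leq \cN$; combined with Lemma \ref{lem:param1} this yields $\cN_a = \cN$, which is (1).

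\textbf{Main obstacle---properness of $Y$.} The delicate point is to arrange that $Y$ is a \emph{proper} subset of $X$. The plan is first to show that each $\{f_i = 0\} \cap X$ is a proper subgerm of $X$ at $a_0$: if $f_i$ vanished on a neighborhood of $a_0$ in $X$, then via Lemma \ref{lem:smooth2}---the smooth part $\Omega$ of $X$ near $a_0$ has finitely many components, each adherent to $a_0$---quasianalyticity on each component would force $f_i$ to vanish on all of $\Omega$ near $a_0$, and by density on all of $X$ near $a_0$. A further shrinking would then give $f_i = 0$ in $\cQ(X)$ and thus $c_i = 0$ in $A$, contradicting our nonvanishing choice. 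Similarly the germ of $Z$ at $a_0$ must be a proper subgerm of $X$ at $a_0$ (otherwise the lemma's conclusion is vacuous). Finally, by irreducibility of the germ of $X$ at $a_0$, the finite union $Y$ is a proper subgerm of $X$ at $a_0$, and so after one last shrinking of $U$ the desired global properness $Y \subsetneq X$ holds.
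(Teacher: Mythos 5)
Your proposal is correct and follows essentially the same approach as the paper: choose $G^{(i)} \in M$ realizing each vertex $(\al_i,j_i)$ of $\cN$, set $Y$ to be $Z$ together with the zero sets of the leading coefficients, verify that $\exp G^{(i)}(a,x) = (\al_i,j_i)$ for $a\in X\setminus Y$, and invoke Lemma \ref{lem:param1} for the reverse inequality. Your expansion of the properness argument via Lemma \ref{lem:smooth2}, quasianalyticity and irreducibility correctly spells out what the paper asserts in a single line from Corollary \ref{cor:irred}; just be careful to perform all neighbourhood shrinkings (from Corollary \ref{cor:irred} and Lemma \ref{lem:smooth2}) \emph{before} choosing the $G^{(i)}$ and asserting $c_i\neq 0$ in $\cQ(X,Z)$, since as phrased the ``further shrinking'' comes after the nonvanishing choice and the contradiction is slightly circular: the clean formulation is that on the already-shrunk $U$, $f_i\neq 0$ in $\cQ(X)$ together with Lemma \ref{lem:smooth2} and quasianalyticity forces $\{f_i=0\}\cap X$ to avoid a neighbourhood of $a_0$ in $X$.
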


\begin{proof}
Let $(\be_i,k_i)$, $i=1,\ldots,t$, denote the vertices of $\cN$. For each $i=1,\ldots,t$, take 
$$
G_i = \sum_{j=1}^q\sum_{\al\in\IN^n} G_{i,(\al,j)} x^{(\al,j)} \in M, \quad  \text{such that } \exp G_i = (\be_i,k_i). 
$$
Put
$$
Y := Z\, \cup\, \bigcup_{i=1}^t \left\{a \in X\setminus Z:\, G_{i,(\al,j)}(a) = 0\right\}.
$$
By Corollary \ref{cor:irred}, we can assume that $\cQ(X,Z)$ is an integral domain; therefore,
$Y$ is a proper closed subset of $X$. If $a \in X\setminus Y$, then
$G_i(a,x) \in M_a$ and $\exp G_i(a,x) = (\be_i,k_i)$; therefore, $\cN \subset \cN_a$, so that $\cN_a \leq \cN$.
By Lemma \ref{lem:param1}, $\cN_a = \cN$.
\end{proof}

In order to use the results above in an efficient way, it will be convenient to have a stronger version
of Lemma \ref{lem:param2}. Let $a_0 \in X$. By shrinking $U$ to a suitable neighbourhood of $a_0$,
we can assume that $X=\bigcup_{l=1}^t X_l$, where, for each $l$, $X_l$ is a proper closed $\cQ$-subset
of $U$ and the germ of $X_l$ at $a_0$ is irreducible (none contained in another). For each $l$, let
$A_l := \cQ(X_l,Z_l)$, where $Z_l:=Z\cap X_l$, let $M_l$ denote the submodule of $A_l\llb x\rrb^p$ induced
by $M$ (i.e., by the given $\Phi_1,\ldots,\Phi_q$), and set $\cN_l:=\cN(M_l)$. Condition (1) in the
following corollary is used in Section \ref{sec:proof}.

\begin{corollary}\label{cor:irredstrat}
After shrinking $U$ to a suitable neighbourhood of $a_0$, for each $l=1,\ldots,t$:
\begin{enumerate}
\item
there is a proper closed $\cQ$-subset $Y_l$ of $X_l$ containing $Z_l$, such that 
\smallskip
\begin{enumerate}
\item
$\cN_a = \cN_l$,
for all $a\in X_l\backslash Y_l$,
\smallskip
\item
for every vertex $(\al,j)$ of $\cN_l$, there exists $G\in M$ such that
$$
\exp G^l = (\al,j) = \exp G(a,x),
$$
for all $a\in X_l\backslash Y_l$, where $G^l$ denotes the element of $M_l$ induced by $G$;
\end{enumerate}
\smallskip
\item
$\cN \leq \cN_l$.
\end{enumerate}
\end{corollary}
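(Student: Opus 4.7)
The plan is to apply Lemma~\ref{lem:param2} with $(X_l, Z_l, M_l)$ in place of $(X, Z, M)$ to obtain the candidate $Y_l$ together with vertex-realizers in $M_l$; to lift those realizers to elements of $M$; and finally to deduce part~(2) from Lemma~\ref{lem:param1} evaluated at a point of $X_l \setminus Y_l$. First, using Corollary~\ref{cor:irred} on each $(X_l, Z_l)$, shrink $U$ around $a_0$ so that every $A_l$ is an integral domain; a further shrinking arranges that the restriction $\cQ(X) \to \cQ(X_l)$ is surjective for each $l$ (at the germ level at $a_0$ this is just the quotient by the ideal of $X_l$).

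For (1)(a), applying Lemma~\ref{lem:param2} to $(X_l, Z_l, M_l)$ produces a proper closed $\cQ$-subset $Y_l \supset Z_l$ of $X_l$ with $\cN_a = \cN_l$ for every $a \in X_l \setminus Y_l$, and, for each vertex $(\al, j)$ of $\cN_l$, an element $F_{(\al,j)} \in M_l$ with $\exp F_{(\al,j)} = (\al, j) = \exp F_{(\al,j)}(a, x)$ on $X_l \setminus Y_l$. To upgrade to (1)(b) I lift each $F_{(\al,j)}$ to $M$. Write $F_{(\al,j)} = \sum_i c_i \Phi_i^l$ with $c_i \in A_l\llb x\rrb$. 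Only those coefficients of $c_i$ of $L$-degree at most $L(\al)$ influence the coefficients of $F_{(\al,j)}$ at exponents $(\be,k) \leq (\al,j)$, so truncating each $c_i$ to a polynomial $\bar c_i \in A_l[x]$ preserves the leading exponent and leading coefficient of $\bar F := \sum_i \bar c_i \Phi_i^l$. The finitely many coefficients of the $\bar c_i$ are elements of $A_l = \cQ(X_l, Z_l)$, each a quotient $g/h$ with $g, h \in \cQ(X_l)$ and $h$ nonvanishing on $X_l \setminus Z_l$. Let $H \in \cQ(X_l)$ be the product of these denominators---still nonvanishing on $X_l \setminus Z_l$, hence a unit in $A_l$---so that $H \bar c_i \in \cQ(X_l)[x]$. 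Lifting each coefficient through the surjection $\cQ(X) \twoheadrightarrow \cQ(X_l)$ produces $\tilde c_i \in \cQ(X)[x] \subset A\llb x\rrb$ with $\tilde c_i|_{X_l} = H \bar c_i$, and then
$$
G_{(\al,j)} := \sum_i \tilde c_i \Phi_i \in M
$$
has induced element $G_{(\al,j)}^l = H \bar F$, whose leading exponent is $(\al, j)$ (multiplication by the unit $H$ does not move the support). For $a \in X_l \setminus Y_l$, the coefficient of $G_{(\al,j)}(a, x)$ at $(\al, j)$ equals $H(a)$ times the leading coefficient of $F_{(\al,j)}(a, x)$---both nonzero---while all coefficients of $G_{(\al,j)}^l$ at exponents strictly below $(\al, j)$ vanish in $A_l$ and hence at $a$; so $\exp G_{(\al,j)}(a, x) = (\al, j)$.

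For (2), pick any $a \in X_l \setminus Y_l$; such a point exists because $Y_l$ is a proper closed $\cQ$-subset of the irreducible germ of $X_l$ at $a_0$. Since $Y_l \supseteq Z \cap X_l$ and $a \in X_l$, we have $a \in X \setminus Z$, and Lemma~\ref{lem:param1} applied to $M$ over $X$ gives $\cN \leq \cN_a$; combined with $\cN_a = \cN_l$ from~(1)(a), this yields $\cN \leq \cN_l$.

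The main obstacle is the lifting step in~(1)(b): a general element of $M_l$ involves infinitely many coefficients in $A_l$ that one cannot coherently lift to $A = \cQ(X, Z)$, and the map $A \to A_l$ is not surjective in general. The saving observation is that the initial exponent and leading coefficient of $F_{(\al,j)}$ depend only on finitely many low-$L$-degree terms, which reduces the lifting to finitely many quotients in $\cQ(X_l, Z_l)$---tractable after clearing a single common denominator and invoking surjectivity of $\cQ(X) \to \cQ(X_l)$.
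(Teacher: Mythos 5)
Your proof is correct but takes a genuinely different route from the paper's. Both begin by applying Lemma~\ref{lem:param2} to each irreducible component $(X_l,Z_l,M_l)$ to get $Y_l$ and vertex-realizers $F\in M_l$; the step both must then address is that an arbitrary element of $M_l$ (an $A_l\llb x\rrb$-linear combination of the $\Phi_i^l$) has no reason to be induced by an element of $M$, since $A=\cQ(X,Z)\to A_l=\cQ(X_l,Z_l)$ is not surjective. Your resolution is to truncate the coefficients $c_i\in A_l\llb x\rrb$ to polynomials $\bar c_i\in A_l[x]$ (legitimate since only the finitely many $\ga$ with $L(\ga)\le L(\al)$ can contribute at or below the target exponent), clear the finitely many resulting denominators, and lift the finitely many $\cQ(X_l)$-coefficients through the surjection $\cQ(X)\twoheadrightarrow\cQ(X_l)$ obtained after a further shrinking of $U$; multiplication by the unit $H\in A_l$ does not move the initial term, so the resulting $G\in M$ induces an element with the correct initial exponent and nonvanishing leading coefficient off $Y_l$. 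The paper instead avoids invoking the surjection at all: for the smallest vertex it chooses \emph{real} polynomials $\xi_k(x)$ realizing that vertex in $M_a$ at one fixed generic $a$, observes that $\eta_k(a',x):=\xi_k(a'+x)$ lies in $\IR[a',x]$ and hence is defined over all of $X$, so $G_1:=\sum_k\eta_k\Phi_k\in M$ works (using that the smallest vertex minorizes $\cN_l$); it then obtains realizers for the higher vertices by the Gaussian-elimination induction of Lemma~\ref{lem:param1}, tracking that each correction stays induced from $M$. Both approaches are valid; yours is arguably more direct conceptually, at the cost of justifying the lifting step, which the paper's polynomial-shift trick sidesteps. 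Your handling of part~(2) — simply apply Lemma~\ref{lem:param1} at a single point of $X_l\setminus Y_l\subset X\setminus Z$ and combine with~(1)(a) — is cleaner than the paper's suggestion to ``mimic the proof of Lemma~\ref{lem:param1}.''
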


\begin{proof}
By Lemma \ref{lem:param2}, we can assume that, for each $l$, there exists $Y'_l \subset X_l$
such that the conclusions of Lemma \ref{lem:param2} hold with respect to $Y'_l$, $M_l$ and $\cN_l$.

Consider any fixed $l$. We will show that, for every vertex $(\al,j)$ of $\cN_l$, there exists $G\in M$ such
that $\exp G^l = (\al,j)$, where $G^l$ denotes the element of $M_l \subset A_l\llb x\rrb^p$ induced by $G$.
The existence of $Y_l$ with properties (1)(a),\,(b) then follows as in the proof of Lemma  \ref{lem:param2}.
Condition (2) also then follows, by mimicking the proof of Lemma \ref{lem:param1}.

Let $(\al_i,j_i)$, $i=1,\ldots,s$, denote the vertices of $\cN_l$, indexed in increasing order. Consider
a point $a\in X_l\backslash Y'_l$. There exist polynomials $\xi_k(x)$, $k=1,\ldots,q$, such that
$\exp\left(\sum_{k=1}^q \xi_k(a+x) \Phi_k(a,x)\right) = (\al_1,j_1)$. Let $G_1 := \sum_{k=1}^q \eta_k\Phi_k 
\in A\llb x\rrb^p$, where $\eta_k(a,x) = \xi_k(a+x)\in A\llb x\rrb$, $k=1,\ldots,q$. Then $G_1\in M$. Let $G_1^l \in M_l$
denote the element induced by $G_1$. Then $\exp G_1^l = (\al_1,j_1)$, since $(\al_1,j_1)$ is the smallest
element of $\cN_l$.

We now argue as in the proof of Lemma \ref{lem:param1}. Suppose that $t>1$ and that, for all $h=1,\ldots,r$ (where
$1\leq r < t$), there exists $G_h\in M$ such that $\exp G_h^l = (\al_h,j_h)$, where $G_h^l \in M_l$ is the element
induced by $G_h$. As above, there exists $G\in M$ such that $\exp G(a,x) = (\al_{r+1},j_{r+1})$. Let
$G^l$ denote the element of $M_l$ induced by $G$. Then $\exp G^l \leq (\al_{r+1},j_{r+1})$. 

Suppose that
$\exp G^l < (\al_{r+1},j_{r+1})$. Then $\exp G^l \in \bigcup_{h=1}^r (\al_h,j_h)+\IN^n$, so that 
$\mon G^l = G^l_{(\al_h+\ga,j_h)}x^{(\al_h+\ga,j_h)}$, for some $h\leq r$ and $\ga\in\IN^n$, where 
$G^l_{(\al_h+\ga,j_h)} \in A_l$ is induced by an element of $A$, and $G^l_{(\al_h+\ga,j_h)}(a)=0$. On the 
other hand, $\mon G^l_h = G^l_{h, (\al_h,j_h)} x^{(\al_h,j_h)}$, where $G^l_{h, (\al_h,j_h)}\in A_l$ is induced
by an element of $A$ and $G^l_{h, (\al_h,j_h)}(a)\neq 0$. Let 
$H(x)= G^l_{h, (\al_h,j_h)} G^l(x) -  G^l_{(\al_h+\ga,j_h)} x^\ga G^l_h(x) \in A_l\llb x\rrb^p$. Then
$\exp G^l < \exp H \leq (\al_{r+1},j_{r+1})$ and the result follows.
\end{proof}

\subsection{Relations among parametrized families}\label{subsec:paramrel}
We continue to use the notation introduced in \S\ref{subsec:param}. For each $a\in X\backslash Z$,
consider the module of formal relations
\begin{equation}\label{eq:paramrel}
\Rel_a := \Rel(\Phi_1(a,x),\ldots,\Phi_q(a,x)) \subset \IR\llb x\rrb^q
\end{equation}
(see \S\ref{subsec:rel}). The following result is given in \cite[Ch.\,6]{BMPisa} in the case that
$\cQ=\cO$ and the germ of $X$ at $a_0$ is irreducible.

\begin{theorem}\label{thm:paramrel}
Let $a_0 \in X$. Then, after shrinking $U$ to a suitable neighbourhood of $a_0$,
there is a proper closed $\cQ$-subset $Y$ of $X$ containing $Z$, and a finite number of elements
$P_1,\ldots,P_s \in B\llb x\rrb^q$, where $B := \cQ(X,Y)$, such that $P_1(a,x),\ldots,P_s(a,x)$ 
generate $\Rel_a$, for all $a\in X\backslash Y$.
\end{theorem}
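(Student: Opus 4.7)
The plan is to reduce to the case where the germ of $X$ at $a_0$ is irreducible, replace the $\Phi_i$ by a standard basis $\Psi_1,\ldots,\Psi_m$ of $M$ over a suitable ring $B=\cQ(X,Y)$, apply Theorem \ref{thm:rel} to describe $\Rel(\Psi_1,\ldots,\Psi_m)$ over $B\llb x\rrb$, transfer those relations back to $\Phi_1,\ldots,\Phi_q$ via change-of-generators matrices, and verify that the whole construction specializes correctly at every $a\in X\setminus Y$.

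I would first reduce to the irreducible case. Decompose $X=\bigcup_{l=1}^t X_l$ near $a_0$ into irreducible closed $\cQ$-subgerms (with $Z_l:=Z\cap X_l$); prove the statement on each $X_l$ separately; then take $Y:=\bigcup_l Y_l\,\cup\,\bigcup_{l\neq l'}(X_l\cap X_{l'})$, and glue the resulting generators by multiplying those obtained on $X_l$ by a function vanishing on $\bigcup_{l'\neq l}X_{l'}$ but not on $X_l\setminus Y$ (enlarging $Y$ if needed). Since each $a\in X\setminus Y$ lies in a unique $X_l$, this yields generators of $\Rel_a$. Now assume the germ of $X$ at $a_0$ is irreducible and, by Corollary \ref{cor:irred}, that $A:=\cQ(X,Z)$ is a domain. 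By Lemma \ref{lem:param2}, after shrinking $U$ there is a proper closed $\cQ$-subset $Y\supset Z$ of $X$ with $\cN_a=\cN:=\cN(M)$ for all $a\in X\setminus Y$, and, for each vertex $(\al_i,j_i)$ of $\cN$ ($i=1,\ldots,m$), an element of $M$ with initial exponent $(\al_i,j_i)$ at every such $a$. Enlarging $Y$ so that the corresponding initial coefficients are units of $B:=\cQ(X,Y)$, Corollary \ref{cor:diag}(2) yields the standard basis $\Psi_1,\ldots,\Psi_m\in B\llb x\rrb^p$ of $B\llb x\rrb\cdot M$, with $\Psi_i=x^{(\al_i,j_i)}+R_i$ and $\supp R_i\subset\De$.

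Since $\Psi_1,\ldots,\Psi_m$ and $\Phi_1,\ldots,\Phi_q$ generate the same $B\llb x\rrb$-submodule of $B\llb x\rrb^p$, there exist matrices $C\in B\llb x\rrb^{q\times m}$ and $D\in B\llb x\rrb^{m\times q}$ with $\Phi=C\Psi$ and $\Psi=D\Phi$ (viewing $\Phi,\Psi$ as column vectors). By Theorem \ref{thm:rel} applied over $B$, the module $\Rel(\Psi_1,\ldots,\Psi_m)\subset B\llb x\rrb^m$ has a standard basis $P'_1,\ldots,P'_{s'}$. Set $P_k:=P'_k D\in B\llb x\rrb^q$ for $k=1,\ldots,s'$ and take $P_{s'+1},\ldots,P_{s'+q}$ to be the rows of $I_q-CD$. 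Each is a relation among $\Phi_1,\ldots,\Phi_q$: the first $s'$ because $\sum_i P'_{k,i}\Psi_i=0$ and $\Psi=D\Phi$, the remaining $q$ because $(I_q-CD)\Phi=\Phi-C\Psi=0$. Conversely, any row $H\in\Rel(\Phi_1,\ldots,\Phi_q)$ satisfies $HC\in\Rel(\Psi_1,\ldots,\Psi_m)$, so $HC=\sum_l\xi_l P'_l$ for some $\xi_l\in B\llb x\rrb$; then the identity $H=HCD+H(I_q-CD)$ expresses $H$ as a $B\llb x\rrb$-linear combination of the $P_k$.

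Finally I would specialize at any $a\in X\setminus Y$. Since no denominator in $B$ vanishes there, evaluation at $a$ extends to a continuous ring homomorphism $B\llb x\rrb\to\IR\llb x\rrb$, under which all of the identities above specialize. Together with $\cN_a=\cN$, the fact that $\Psi_i(a,x)\in M_a$ has initial monomial $x^{(\al_i,j_i)}$ and $\supp R_i(a,x)\subset\De$ forces, via the uniqueness part of Corollary \ref{cor:diag}(2), the family $\Psi_1(a,x),\ldots,\Psi_m(a,x)$ to be the standard basis of $M_a$. Because the $\Psi_i$ have initial coefficient $1$, every step of Hironaka's algorithm in the proofs of Theorems \ref{thm:formaldiv} and \ref{thm:rel} is polynomial in the coefficients of the data and is continuous in the Krull topology, hence commutes with evaluation at $a$; so $P'_l(a,x)$ is the standard basis of $\Rel(\Psi_1(a,x),\ldots,\Psi_m(a,x))$ given by Theorem \ref{thm:rel}. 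Rerunning the change-of-generators argument pointwise in $\IR\llb x\rrb$ then shows that $P_1(a,x),\ldots,P_{s'+q}(a,x)$ generate $\Rel_a$. The principal obstacle I anticipate is exactly this commutation between Hironaka's algorithm and evaluation at $a$; it is the reason for enlarging $Y$ so that the relevant initial coefficients become units of $B$ and for normalizing to the standard basis before invoking Theorem \ref{thm:rel}.
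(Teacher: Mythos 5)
Your proposal is correct and follows the same overall strategy as the paper: reduce to the case that the germ of $X$ at $a_0$ is irreducible, normalize to a standard basis $\Psi_1,\ldots,\Psi_m$ over a suitable localization $B=\cQ(X,Y)$, invoke Theorem \ref{thm:rel} for the relations among the $\Psi_i$, transfer back to the $\Phi_j$, and specialize pointwise. The change-of-generators step is where you genuinely diverge from the paper, and to your advantage: the paper uses Corollary \ref{cor:prep} to extract minimal subsets $\Phi_1,\ldots,\Phi_m$ and $\Psi_1,\ldots,\Psi_m$, builds the matrices $\Theta,\,\Xi,\,\mathrm{T},\,\mathrm{U}$, and must enlarge $Y$ by the vanishing of $\det\mathrm{U}(\,\cdot\,,0)$ to invert $\mathrm{U}$ and use the direct-sum decomposition of $\Rel(\Phi_1,\ldots,\Phi_q)$; your two matrices $C,D$ with $\Phi=C\Psi$, $\Psi=D\Phi$ and the identity $H=HCD+H(I_q-CD)$ dispense with invertibility and with the minimal-subset bookkeeping entirely, yet give exactly the right generating set $\{P'_kD\}\cup\{\text{rows of }I_q-CD\}$. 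Your specialization argument is also sound: since the $\Psi_i$ have initial coefficient $1$, every division in Theorem \ref{thm:formaldiv} and Theorem \ref{thm:rel} involves no division in the coefficient ring and therefore commutes with evaluation at any $a$ where the (finitely many) denominators of $B$ are nonzero.

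The one place where you should add a sentence is the reducible case. The paper handles it via Corollary \ref{cor:irredstrat}, which guarantees that the standard bases and localizing denominators on each component $X_l$ are induced by elements of $A=\cQ(X,Z)$; this is what makes the final $Y$ and the $P_k$ live over all of $X$. Your cutoff-function gluing can be made to work, but only because the denominators appearing in the coefficients of each $P^{(l)}_k$ all lie in a \emph{finitely generated} multiplicative subset of $\cQ(X_l)$ (coming from the initial coefficients via Lemma \ref{lem:param2}(2) and Corollary \ref{cor:diag}(2)); one then enlarges $Y$ by the finitely many zero loci of quasianalytic extensions of these generators before multiplying by $\rho_l$, so that $\rho_l\,\tilde f/\tilde g$ is a bona fide element of $\cQ(X,Y)$ rather than a $0/0$ expression on the other components. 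If this finiteness is not stated, the argument appears to require killing infinitely many zero sets. Either cite Corollary \ref{cor:irredstrat} or make the finite-generation of the denominator set explicit.
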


\begin{proof}
First suppose that the germ of $X$ at $a_0$ is irreducible.
By Corollary \ref{cor:irred}, we can assume that that $A$ is an integral domain.
Let $\IK$ denote the field of fractions of $A$.
Let $\Psi_1,\ldots,\Psi_r \in \IK\llb x\rrb^p$ denote the standard basis of $M$; in particular,
$(\al_i,j_i) := \exp \Psi_i$, $i=1,\ldots, r$, are the vertices of $\cN(M)$. By Corollary \ref{cor:prep},
we can assume that $\Phi_1,\ldots,\Phi_m$ and $\Psi_1,\ldots,\Psi_m$ (where $m\leq r,s$) are
smallest subsets of $\Phi_1,\ldots,\Phi_q$ and $\Psi_1,\ldots,\Psi_r$ (respectively) which generate
submodules with the same diagram $\cN(M)$.

(Regarding each $\Phi_i$ and $\Psi_i$ as a column vector with entries in $\IK\llb x\rrb$) write
\begin{align}
(\Phi_1 \cdots \Phi_q) &= (\Phi_1 \cdots \Phi_m)\cdot (\text{I}\ \Th),\label{eq:gen1}\\
(\Psi_1 \cdots \Psi_r) &= (\Psi_1 \cdots \Psi_m) \cdot (\text{I}\ \Xi)\label{eq:gen2},
\end{align}
where $\text{I}$ is the $m\times m$ identity matrix, and $\Th,\, \Xi$ are $m\times (q-m),\, m\times (r-m)$
matrices (respectively) with entries in $\IK\llb x\rrb$. By the formal division algorithm,
\begin{align*}
(\Phi_1 \cdots \Phi_m) &= (\Psi_1 \cdots \Psi_r)\cdot \text{T}\\
                                    &= (\Psi_1 \cdots \Psi_m) \cdot (\text{I}\ \Xi)\cdot \text{T},
\end{align*}
where $\text{T}$ is an $r\times m$ matrix with entries in $\IK\llb x\rrb$. In fact, there is a finitely
generated multiplicative subset $S$ of $A$ such that $\Psi_1,\ldots,\Psi_r \in S^{-1}A\llb x\rrb^p$
and $\Th,\, \Xi$ and $\text{T}$ have entries
in $S^{-1}A\llb x\rrb$ (see Remarks \ref{rem:mult,nak}).

Then $\text{U}(x) := ((\text{I}\ \Xi)\cdot \text{T})(x)$ is an $m\times m$ matrix with entries in $S^{-1}A\llb x\rrb$,
and $\text{U}(0)$ is invertible over $\IK$, by Corollary \ref{cor:prep}. Therefore, $\text{U}(x)$ is invertible
over $\IK\llb x\rrb$. Let $Y:= Z\cup W\cup W'$, where
\begin{align*}
W &:= \{a\in X\backslash Z:\, \text{ some generator of } S \text{ vanishes at } a\},\\
W' &:= \{a \in X\backslash (Z\cup W):\, \det \text{U}(a,0) = 0\}.
\end{align*}
Then $Y$ is a proper $\cQ$-subset of $X$ containing $Z$.

Note that \eqref{eq:gen1} induces an isomorphism
\begin{align*}
\Rel(\Phi_1,\ldots,\Phi_m) \oplus S^{-1}A\llb x\rrb^{q-m} &\to \Rel(\Phi_1,\ldots,\Phi_q)\\
(\xi,\,\zeta) &\mapsto (\xi - \Th\cdot \zeta,\, \zeta),
\end{align*}
where $\xi = (\xi_1,\ldots,\xi_m)$, $\zeta=(\zeta_1,\ldots,\zeta_{q-m})$. (Similarly, \eqref{eq:gen2}.)

Let $P_1,\ldots,P_s$ denote the ``standard relations'' among $\Psi_1,\ldots,\Psi_r$ (as given by
Theorem \ref{thm:rel}). Then, for all $a \in X\backslash(Z\cup W)$, $P_1(a,x),\ldots,P_s(a,x)$ are
the standard relations among $\Psi_1(a,x),\ldots,\Psi_r(a,x)$. Let $a \in X\backslash Y$. Then
\begin{enumerate}
\item $\Rel(\Psi_1(a,x),\ldots,\Psi_m(a,x))$ is generated by $\eta + \Xi(a,x)\cdot\zeta$, where
$(\eta,\zeta) = (\eta_1,\ldots,\eta_m,\zeta_1,\ldots,\zeta_{r-m})$ runs over $P_k(a,x)$, $k=1,\ldots,s$;

\smallskip
\item $\Rel(\Phi_1(a,x),\ldots,\Phi_m(a,x))$ is generated by $\text{U}^*(a,x)\cdot(\eta + \Xi(a,x)\cdot \zeta)$,
where $\text{U}^*$ denotes the adjoint matrix of $\text{U}$ and $(\eta,\zeta)$ runs over the $P_k(a,x)$.
\end{enumerate}
For each $k=1,\ldots,s$, put
$$
\xi_k(a,x) := \text{U}^*(a,x)\cdot(\eta + \Xi(a,x)\cdot \zeta),
$$
where $(\eta,\zeta) = P_k(a,x)$. Finally, then, $\Rel(\Phi_1(a,x),\ldots,\Phi_q(a,x))$ is generated by 
the relations $(\xi_k(a,x) - \Th(a,x)\cdot\zeta_l,\, \zeta_l)$, $k=1,\ldots,s$, $l=1,\ldots,q-m$,
where $\zeta_l = (0,\ldots,1,\ldots,0)$ with $1$ in the $l$th place. This completes the proof in the
case that the germ of $X$ at $a_0$ is irreducible.

For the general case, let us use the notation preceding Corollary \ref{cor:irredstrat}. Fix $l$ and consider
the proof above for the component $X_l$ of $X$ (writing $A_l$, $S_l$ and $M_l$ instead of $A$, $S$ and $M$,
and $\Psi^l_j$ instead of $\Psi_j$, $j=1,\ldots,r$, in the proof above). It follows from Corollary \ref{cor:irredstrat}
that we can assume that $\Psi^l_1,\ldots,\Psi^l_r$ are induced by elements of $A\llb x\rrb^p$, and that
the generators of $S_l$ are induced by elements of $A$. The result follows.
\end{proof}

\section{Proof of the division theorem}\label{sec:proof}
In this section, we will prove Theorem \ref{thm:main}. The proof will be by induction over a (local)
stratification of $U$ given by the following theorem, which summarizes
the results in Section \ref{sec:param}.

\begin{theorem}\label{thm:strat}
Let $\Phi_1,\ldots,\Phi_q \in \cQ(U)^p$, where $U$ is open in $\IR^n$. For all $a\in U$, let
$M_a \subset \IR\llb x\rrb^p$ denote the module generated by $\Phi(a,x) = \hPhi_{i,a}(x)$, 
$i=1,\ldots,q$, and let $\Rel_a \subset \IR\llb x\rrb^q$ denote the module of relations
$\Rel(\Phi_1(a,x),\ldots,\Phi_q(a,x))$ (see \S\ref{subsec:rel}). Then, given $a_0\in U$, there
is a neighbourhood $V$ of $a_0$ in $U$, and a finite filtration
\begin{equation}\label{eq:strat}
V = X_0 \supset X_1 \supset \cdots \supset X_{t+1} = \emptyset,
\end{equation}
such that, for each $k=0,\ldots,t$:
\begin{enumerate}
\item
$X_k$ is a closed $\cQ$-subset of $V$.
\smallskip
\item
$X_k\backslash X_{k+1}$ is smooth.
\smallskip
\item
The diagrams of initial exponents $\cN_a := \cN(M_a)$ and $\cN(\Rel_a)$ are constant (say, $\cN_a = \cN_k$
and $\cN(\Rel_a)=\cN(Rel)_k$) on $X_k\backslash X_{k+1}$.
\smallskip
\item
Let $A_k := \cQ(X_k,X_{k+1})$ (see \S\ref{subsec:geom}) and let $M_k$ denote
the submodule of $A_k\llb x\rrb^p$ generated by (the elements induced by) $\Phi_1,\ldots,\Phi_q$.
Then there exist $\Psi_{k1},\ldots,\Psi_{k,r_k} \in M_k$ such that $\Psi_{k1}(a,x),
\ldots,\Psi_{k,r_k}(a,x)$
represent the vertices of $\cN_a =\cN_k$, for all $a\in X_k\backslash X_{k+1}$.
\smallskip
\item
There exist $P_{k1},\ldots,P_{k,s_k} \in A_k\llb x\rrb^q$ such that $P_{k1}(a,x),\ldots,P_{k,s_k}(a,x)$ 
represent the vertices of $\cN(\Rel_a)=\cN(\Rel)_k$, for all $a\in X_k\backslash X_{k+1}$.
\end{enumerate}
\end{theorem}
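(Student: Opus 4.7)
The plan is to construct the filtration by Noetherian induction: starting from $X_0 := V$, at each step I produce a proper closed $\cQ$-subset $X_{k+1}$ of $X_k$ on whose complement all five listed conditions hold. Termination after finitely many steps is guaranteed by topological Noetherianity (Lemma \ref{lem:noeth}) applied to the strictly decreasing chain of closed $\cQ$-subsets, and the finitely many shrinkings of $V$ required in the course of the construction combine into a single sufficiently small neighborhood of $a_0$.

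The construction of $X_{k+1}$ from $X_k$ proceeds in three stages. \emph{Stage 1 (module diagram).} I decompose the germ of $X_k$ at $a_0$ into its finitely many irreducible components and apply Corollary \ref{cor:irredstrat} to each; I then enlarge the resulting bad set using Theorem \ref{thm:param}(1)--(2) (which provides finitely many values of $\cN_a$ together with the semicontinuity of the level sets) to obtain a proper closed $\cQ$-subset $X^{(1)}$ of $X_k$ such that $\cN_a$ is constant, say $\cN_a=\cN_k$, on $X_k\setminus X^{(1)}$, with each vertex of $\cN_k$ represented by the specialization of some $\Psi_{kj}\in M_k$. This secures condition (4) and half of condition (3). \emph{Stage 2 (relation diagram).} I apply Theorem \ref{thm:paramrel} componentwise on $X_k$ to enlarge $X^{(1)}$ to a proper closed $\cQ$-subset $X^{(2)}$, together with elements $P_j\in\cQ(X_k,X^{(2)})\llb x\rrb^q$ whose specializations $P_j(a,x)$ generate $\Rel_a$ for every $a\in X_k\setminus X^{(2)}$. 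I then re-apply the stratification procedure of Stage 1, now with the $P_j$ as input (viewed as elements of $B\llb x\rrb^q$ for $B:=\cQ(X_k,X^{(2)})$), to produce a further proper closed $\cQ$-subset $X^{(3)}\supseteq X^{(2)}$ on whose complement the diagram of the submodule of $B\llb x\rrb^q$ generated by the $P_j$ is constant and its vertices are represented by fixed $P_{kj}$; since this submodule coincides pointwise with $\Rel_a$ on the complement, this yields the remaining half of condition (3) together with condition (5). \emph{Stage 3 (smoothness).} Lemma \ref{lem:smooth} supplies a proper closed $\cQ$-subset $X^{(4)}$ of $X_k$ with $X_k\setminus X^{(4)}$ smooth; setting $X_{k+1}:=X^{(3)}\cup X^{(4)}$, a finite union of closed $\cQ$-subsets and hence itself such a subset, completes the inductive step.

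The main obstacle I expect is ensuring constancy of $\cN(\Rel_a)$ in Stage 2: Theorem \ref{thm:paramrel} guarantees common generators for $\Rel_a$ on the complement of a proper closed $\cQ$-subset, but not a common standard basis, so a priori the initial exponents of those generators may vary with $a$. The resolution is to invoke the stratification machinery a second time, applying Theorem \ref{thm:param} to the parametric family of submodules of $B\llb x\rrb^q$ generated by the $P_j$; this works because the results of Section \ref{sec:param} apply to any quasianalytic family of formal power series, with any quasianalytic base ring. The nested iteration terminates not via descent of a single numerical invariant but via topological Noetherianity applied to the successively produced proper closed $\cQ$-subsets.
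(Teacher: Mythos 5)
Your proposal is correct and takes essentially the same approach the paper intends: the paper's proof is the single sentence that Theorem \ref{thm:strat} follows from Theorems \ref{thm:param}, \ref{thm:paramrel}, Corollary \ref{cor:irredstrat} and the lemmas of \S\ref{subsec:geom}, and your three-stage construction with Noetherian termination is the natural unpacking of that sentence. In particular, you correctly identify the subtlety the paper leaves implicit — that Theorem \ref{thm:paramrel} only yields generators of $\Rel_a$, not a standard basis, so a second application of Theorem \ref{thm:param} to the resulting family $P_j \in B\llb x\rrb^q$ is needed to obtain the constancy of $\cN(\Rel_a)$ and vertex representatives required by conditions (3) and (5).
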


The diagrams of initial exponents in Theorem \ref{thm:strat} are defined using the ordering of
$\IN^n\times\{1,\ldots,p\}$ (or of $\IN^n\times\{1,\ldots,q\}$) determined by any given positive linear
form $L$, as in Remark \ref{rem:order}. It is easy to see that Theorem \ref{thm:strat} follows from Theorems
\ref{thm:param}, \ref{thm:paramrel} and Corollary \ref{cor:irredstrat}, using the geometric lemmas
of \S\ref{subsec:geom}.

\subsection{Differential calculus lemmas}\label{subsec:diffcalc}
Let $U$ be an open subset of $\IR^n$.

\begin{lemma}[Borel's lemma]\label{lem:borel}
Let $S$ denote a closed $\cC^\infty$ submanifold
of $U$. Consider a field of formal power series on $S$,
$$
F(a,x) = \sum_{\al\in \IN^n} F_{\al}(a)x^\al \in \IR\llb x\rrb = \IR\llb x_1,\ldots,x_n\rrb,\quad a\in S
$$
(i.e., each $F_{\al} = F_{\al}(\xi)$ is a function on $S$). Then there exists $f\in \cC^\infty(U)$ such
that 
$$
F(a,x) = \hf_a(x) \in \cF_a = \IR\llb x\rrb, \quad a\in S,
$$
if and only if each $F_\al \in \cC^1(S)$ and, for all $a\in S$ and $v \in T_a S$ (where $T_a S$ 
denotes the tangent space of $S$ at $a$),
$$
\left(\p_{\xi,v}F\right)(a,x) = \left(\p_{x,v}F\right)(a,x),
$$
where $\left(\p_{\xi,v}F\right)(a,x)$ denotes the directional derivative of $F(\xi,x)$ with respect to $\xi$ at
$a$ in the direction $v$, and $\left(\p_{x,v}F\right)(a,x)$ denotes the formal directional derivative of $F(a,x)\in \IR\llb x\rrb$
in the direction $v$.
\end{lemma}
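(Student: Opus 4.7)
The plan is to handle necessity by a direct Taylor/chain-rule calculation and to handle sufficiency by (i) upgrading the regularity of the coefficients $F_\al$ from $\cC^1$ to $\cC^\infty$ via the compatibility condition, (ii) constructing a local solution by a parametrized Borel lemma after straightening $S$, and (iii) patching the local solutions via a partition of unity. For necessity, if $F(a,x) = \hf_a(x)$ for some $f \in \cC^\infty(U)$, then $F_\al(a) = (\p^{|\al|}f/\p x^\al)(a)/\al!$, which is smooth; comparing the coefficient of $x^\be$ on both sides of the compatibility identity reduces, using $(\be+e_i)! = (\be_i+1)\be!$, to the commutation of partial derivatives $\p_v \p^\be f = \p^\be \p_v f$ for $v\in T_aS$.

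For sufficiency, I would first locally straighten $S$ so that $S=\{z=0\}$ in adapted coordinates $(y,z)\in\IR^d\times\IR^{n-d}$. In these coordinates the compatibility for $v=e_j$, $j\leq d$, becomes $\p_{y_j}F_\be = (\be_j+1)F_{\be+e_j}$; each right-hand side is $\cC^1$ by hypothesis, so each $F_\be$ is $\cC^2$ along $S$, and iterating yields $F_\be\in\cC^\infty(S)$ for all $\be$. Iterating the same identity also gives the closed formula
\[
F_{(\be_y,\be_z)}(y) = \frac{1}{\be_y!}\p_y^{\be_y}G_{\be_z}(y), \quad \text{with } G_\ga(y) := F_{(0,\ga)}(y),
\]
so the entire field is determined by the $\cC^\infty$ normal coefficients $G_\ga$, $\ga\in\IN^{n-d}$.

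Next I would solve the local extension problem by a parametrized Borel construction: choose $\chi\in\cC^\infty_c(\IR)$ with $\chi\equiv 1$ near $0$ and set
\[
f_W(y,z) := \sum_{\ga\in\IN^{n-d}} \chi(\la_\ga |z|)\,G_\ga(y)\,\frac{z^\ga}{\ga!},
\]
with $\la_\ga\to\infty$ growing fast enough that the series and all its partial derivatives converge uniformly on compact subsets of $W$. Evaluating at $z=0$ verifies $\p_z^\de f_W(y,0)/\de! = G_\de(y)$, and the tangential and mixed coefficients at $(y,0)$ then agree with $F((y,0),x)$ automatically by the determination formula of the previous paragraph.

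Finally, I would patch these local solutions using a locally finite cover $\{W_k\}$ of a neighborhood of $S$ (extended by a cover of $U\setminus S$ on which one takes $f\equiv 0$) and a subordinate partition of unity $\{\rho_k\}$: set $f := \sum_k \rho_k f_{W_k}$. Since Taylor expansion is multiplicative and $\sum_k \rho_k\equiv 1$, for every $a\in S$ one has $\hf_a = \sum_k\widehat{\rho_k}_a \hf_{W_k,a} = F(a,x)\sum_k\widehat{\rho_k}_a = F(a,x)$. The main obstacle is the bootstrap step: the hypothesis only directly controls tangential derivatives of the coefficients, and one must carefully extract from the $\cC^1$ assumption plus the compatibility the full $\cC^\infty$ regularity of the normal coefficients $G_\ga$; without that, the Borel-style construction cannot even be written down, and the closed formula expressing mixed $F_{(\be_y,\be_z)}$ in terms of $G_{\be_z}$ would not make sense to high order.
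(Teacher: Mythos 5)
Your proposal takes essentially the same route as the paper's (very terse) proof: reduce by local straightening to the case where $S$ is a coordinate subspace, use the compatibility identity $\p_{y_j}F_\be=(\be_j+1)F_{\be+e_j}$ to bootstrap the coefficients from $\cC^1$ to $\cC^\infty$, and apply the classical parametrized Borel lemma to the normal data $G_\ga=F_{(0,\ga)}$, patching with a partition of unity. The paper merely asserts the reduction and cites the parametrized Borel statement, leaving the bootstrap and patching implicit; you fill in exactly those details correctly, apart from a small normalization slip in the Borel sum (with the paper's convention $F_\al(a)=(\p^{|\al|}f/\p x^\al)(a)/\al!$, the summand should be $\chi(\la_\ga|z|)\,G_\ga(y)\,z^\ga$, not $\chi(\la_\ga|z|)\,G_\ga(y)\,z^\ga/\ga!$, as the verification $\p_z^\de f_W(y,0)/\de!=G_\de(y)$ you state in fact requires).
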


\begin{proof}
The lemma follows from the special case that $S$ is a linear subspace $\IR^k \times \{0\}$ of 
$\IR^n= \IR^k\times \IR^{n-k}$. In this case, the lemma is simply a reformulation of the following
classical statement of Borel's lemma: Given $g_\be(\xi) \in \cC^\infty(\IR^k)$, for all $\be \in \IN^{n-k}$,
there exists $g(\xi,\eta) \in \cC^\infty(\IR^n)$ (where $(\xi,\eta)=(\xi_1,\ldots,\xi_k,\eta_1,\ldots,\eta_{n-k})$)
such that $\p^{|\be|}g/\p \eta^\be = g_\be$, for all $\be$.
\end{proof}

The essential ingredient in the following two lemmas is {\L}ojasiewicz's inequalities, which hold for
functions of class $\cQ$ according to \cite[Thm.\,6.3]{BMselecta}.

\begin{lemma}[l'H\^opital--Hestenes lemma]\label{lem:hop}
Let $Z\subset X$ denote closed $\cQ$-subsets of $U$. Consider a field of formal power series
on $X$,
$$
F(a,x) = \sum_{\al\in \IN^n} F_{\al}(a)x^\al \in \IR\llb x\rrb,\quad a\in X.
$$
Assume that, 
\begin{enumerate}
\item
for each $\al$, $F_\al \in \cC^0(X)$ and $F_\al(a)=0$ if $a\in Z$;
\smallskip
\item
$F|_{X\backslash Z}$ is the field of Taylor series on $X\backslash Z$ of a $\cC^\infty$
function defined in a neighbourhood of $X\backslash Z$.
\end{enumerate}
Then $F$ is the field of Taylor series on $X$ of a $\cC^\infty$ function on $U$ (flat on $Z$).
\end{lemma}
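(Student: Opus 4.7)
The plan is to apply Whitney's classical extension theorem to the jet $(F_\al)_{\al\in\IN^n}$ on the closed set $X\subset U$. Since $F_\al|_Z\equiv 0$, the resulting $\cC^\infty$ function $f$ on $U$ will automatically be flat on $Z$ and have Taylor series $F(a,\cdot)$ at each $a\in X$. What must be checked is Whitney's compatibility condition: for every compact $K\subset X$, every $\al\in\IN^n$, and every $m\geq|\al|$,
\begin{equation*}
F_\al(b)-\sum_{|\be|\leq m-|\al|}\frac{1}{\be!}F_{\al+\be}(a)(b-a)^\be \,=\, o(|b-a|^{m-|\al|})
\end{equation*}
uniformly in $a,b\in K$ as $|b-a|\to 0$. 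Denote by $g$ the $\cC^\infty$ function on an open neighbourhood $W$ of $X\setminus Z$ supplied by hypothesis (2), so that $F_\al|_{X\setminus Z}=(\p^\al g/\al!)|_{X\setminus Z}$.

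There are three cases. If $a,b$ both lie in a compact subset of $X\setminus Z$ bounded away from $Z$, then such a set sits inside a compact subset of $W$, the line $[a,b]$ lies in $W$ once $|b-a|$ is small, and the condition is the standard Taylor remainder estimate for $\p^\al g$. If $a,b\in Z$, every $F_\al$ vanishes and the condition is trivial. The crux is the mixed case, and since $F_{\al+\be}(a)=0$ for $a\in Z$ it reduces to the following vanishing-to-infinite-order statement: for every $a\in Z$, $\al\in\IN^n$, and $m\in\IN$, $F_\al(b)=o(|b-a|^m)$ uniformly as $b$ ranges over a compact subset of $X$ with $|b-a|\to 0$.

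To establish this I would argue by induction on $m$, using Łojasiewicz's inequality for $\cQ$-functions \cite[Thm.\,6.3]{BMselecta} applied to a family $h_1,\ldots,h_r\in\cQ$ whose common zero set is $Z$. On compacts this gives polynomial comparability $d(b,Z)\leq C\bigl(\sum|h_i(b)|\bigr)^{1/\nu}$, combined with $|h_i(b)|\leq C'|b-a|$ by smoothness of the $h_i$. The inductive step selects a companion point $c\in X\setminus Z$ on a path from $b$ that stays in $W$ and approaches $Z$ with Łojasiewicz-controlled length, expands $F_\al(b)=\sum_{|\be|<N}F_{\al+\be}(c)(b-c)^\be/\be!+R$ via Taylor's formula for $\p^\al g$ on $[b,c]\subset W$, and uses the inductive hypothesis on $F_{\al+\be}$ (for $|\be|>0$) together with the continuity of all $F_\ga$ on $X$ to absorb both the sum and the remainder as $c\to Z$ along the path. (In the one-dimensional toy case $U=\IR$, $Z=\{0\}$, this is exactly the l'H\^opital argument: $\sup_{[0,b]}|g^{(N)}|\to 0$ as $b\to 0^+$ combined with the Taylor expansion at $c\in(0,b)$ yields $g(b)=o(b^N)$ for every $N$.)

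The main obstacle will be the construction of paths $[b,c]\subset W$ carrying uniform Łojasiewicz-type estimates as $(a,b)$ varies over a compact set, together with the fact that Łojasiewicz directly governs $\cQ$-functions rather than the merely continuous coefficients $F_\al$; so the flatness of $F_\al$ on $Z$ has to be squeezed out of the ambient smoothness of $g$ on $W$ (which itself is not quasianalytic) through iterated Taylor expansions. A preliminary reduction to the case where $X\setminus Z$ is a union of smooth $\cQ$-manifolds, via the stratification of Lemma \ref{lem:smooth}, should simplify the geometry of the path construction without changing the nature of the argument.
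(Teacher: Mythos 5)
Your general strategy is the right one — this is ultimately a Whitney/Hestenes extension statement, and the geometry of $X$ near $Z$ must be tamed by {\L}ojasiewicz-type estimates via resolution of singularities — but the proposal has a genuine gap, and it is precisely the one you flag at the end.

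The paper does not reprove the Whitney compatibility condition from scratch. It cites \cite[Cor.\,8.2]{BMcomp}, \cite[Prop.\,3.4]{BMP} (generalizing Whitney \cite{Wh}): the l'H\^opital--Hestenes statement holds for any compact $Z\subset X$ such that $X$ is \emph{$r$-regular}, meaning there is a constant $C$ such that any two points $a,b\in X$ can be joined by a rectifiable curve \emph{lying in $X$} of length $\le C|a-b|^{1/r}$. The only new ingredient is that closed $\cQ$-subsets are (locally) $r$-regular, which follows from resolution of singularities and {\L}ojasiewicz inequalities exactly as in the subanalytic case \cite[Thm.\,6.10]{BMihes}. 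The $r$-regularity of $X$ is the structural fact that replaces your ``path construction,'' and it is what makes the uniform estimates go through: one integrates along curves inside $X$ rather than along straight segments $[a,b]$ or $[b,c]$, which in general leave $X$ and over which the derivatives of the ambient extension $g$ are uncontrolled (hypothesis (2) only ties $g$ to $F$ on $X\setminus Z$, not on a full tube around it). So the obstacle you name is exactly where the missing idea lives, and without the notion of $r$-regularity your induction on $m$ does not close.

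There is a second, smaller gap. Your three-way case split omits the case where $a$ and $b$ both lie in $X\setminus Z$ but are close to $Z$. Handling ``$a,b$ bounded away from $Z$'' by the Taylor remainder of $g$, and ``$a\in Z$, $b\notin Z$'' by flatness of $F_\al$, does not produce a \emph{uniform} estimate over a compact $K\subset X$ that meets $Z$: for fixed $\epsilon>0$ the set $\{b\in K: d(b,Z)\ge\epsilon\}$ is compact, but the Whitney modulus must be uniform in $a,b$ ranging over all of $K$. The regime $0<d(a,Z),d(b,Z)\ll 1$ with $|a-b|$ small is exactly where the segment $[a,b]$ may exit any fixed compact subset of $W$ and where the derivatives of $g$ may blow up; again it is the $r$-regular curve in $X$ joining $a$ to $b$, together with the continuity of the $F_\al$ up to $Z$, that controls this case. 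Both issues are resolved at one stroke by quoting the $r$-regular Hestenes lemma; if you want a self-contained argument rather than the citation, the content you would have to reproduce is essentially the proof of \cite[Prop.\,3.4]{BMP}.
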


\begin{proof}
According to \cite[Cor.\,8.2]{BMcomp}, \cite[Prop.\,3.4]{BMP} (generalizing \cite{Wh}), the assertion holds 
for any compact subsets $Z\subset X$ of $U$,
such that $X$ is $r$-\emph{regular}, for some positive integer $r$; i.e., with the property that there exists 
a constant $C$ such that any two points $a,b\in X$ can be joined by a rectifiable curve in $X$ of
length $\leq C|a-b|^{1/r}$. 
The regularity property for a suitable compact neighbourhood of any point
in a closed $\cQ$-subset $X$ of $U$ follows from resolution of singularities and {\L}ojasiewicz's inequalities 
(as proved in \cite[Thm.\,6.10]{BMihes} for the subanalytic case).
\end{proof}

\begin{lemma}[multiplier lemma]\label{lem:mult}
Let $X$ be a closed $\cQ$-subset of $U$. If $\varphi \in \cQ(X)$ and $f$ is the restriction to $X$
of a function $F\in \cC^\infty(U)$ which is flat on the zero-set $(\varphi =0)$, then $f/\varphi$ is the restriction to
$X$ of a $\cC^\infty$ function flat on $(\varphi =0)$.
\end{lemma}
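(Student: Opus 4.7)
The plan is to realize the desired $\cC^\infty$ extension of $f/\varphi$ as the output of the l'H\^opital--Hestenes Lemma~\ref{lem:hop}, applied to an appropriate field of formal power series on $X$. First I would pick an extension $\tilde\varphi \in \cQ(V)$ of $\varphi$ to an open neighbourhood $V$ of $X$ in $U$; this exists by the very definition of $\cQ(X)$. Setting $Z := (\varphi = 0) \subset X$, the zero set $\tilde Z := (\tilde\varphi = 0) \subset V$ satisfies $Z = \tilde Z \cap X$, and $F/\tilde\varphi$ is a well-defined $\cC^\infty$ function on $V \setminus \tilde Z$, which contains a neighbourhood of $X\setminus Z$ in $U$.

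Next I would define a field of formal power series on $X$,
\[
H(a,x) \,:=\, \sum_{\al} H_\al(a)\, x^\al \in \IR\llb x\rrb, \quad a\in X,
\]
by the recipe $H(a,x) := \widehat{(F/\tilde\varphi)}_a(x)$ for $a \in X\setminus Z$ and $H(a,x) := 0$ for $a \in Z$; equivalently, $H_\al(a) = \partial^\al(F/\tilde\varphi)(a)/\al!$ on $X\setminus Z$ and $H_\al \equiv 0$ on $Z$. By construction, $H|_{X\setminus Z}$ is the Taylor series field of $F/\tilde\varphi$ near $X\setminus Z$, so hypothesis (2) of Lemma~\ref{lem:hop} holds for free. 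The remaining job is to show that each $H_\al$ is continuous on all of $X$ and vanishes on $Z$; granted this, Lemma~\ref{lem:hop} delivers a $\cC^\infty$ function $G$ on $U$, flat on $Z$, whose Taylor series at each $a\in X$ is $H(a,x)$. Evaluating at $a$ yields $G(a)=H_0(a)=f(a)/\varphi(a)$ for $a\in X\setminus Z$ and $G(a)=0$ for $a\in Z$, so $G|_X = f/\varphi$, as required.

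The crux will be to prove $\partial^\al(F/\tilde\varphi)(x) \to 0$ as $x \to Z$ within $X\setminus Z$, uniformly on compact sets. I would argue by induction on $|\al|$ using the Leibniz identity
\[
\tilde\varphi \cdot \partial^\al(F/\tilde\varphi) \,=\, \partial^\al F \,-\, \sum_{0 < \beta \leq \al} \binom{\al}{\beta}\, \partial^\beta \tilde\varphi \cdot \partial^{\al-\beta}(F/\tilde\varphi),
\]
combined with two ingredients: flatness of $F$ on $Z$, giving $|\partial^\beta F(x)| = o(d(x,Z)^N)$ for every $N$, uniformly on compact subsets of $U$; and \L{}ojasiewicz's inequality for the quasianalytic function $\varphi$ on $X$ (available via \cite[Thm.\,6.3]{BMselecta}, the same input that underlies Lemma~\ref{lem:hop}), providing $C,r>0$ such that $|\varphi(x)| \geq C\, d(x,Z)^r$ for $x$ in any prescribed compact subset of $X$. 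Since $\tilde\varphi = \varphi$ on $X$, dividing the displayed identity by $\tilde\varphi$ and feeding the inductive bounds into the sum shows that $\partial^\al(F/\tilde\varphi)(x)$ still decays faster than any polynomial in $d(x,Z)$, which gives continuous vanishing of $H_\al$ along $Z$.

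I expect this induction-plus-\L{}ojasiewicz estimate to be the only real technical point; everything else is packaged into the definition of $\cQ(X)$ and into Lemma~\ref{lem:hop}. Note that the construction depends on the choice of extension $\tilde\varphi$, but the conclusion $G|_X = f/\varphi$ is independent of that choice, so no extra verification of compatibility is needed.
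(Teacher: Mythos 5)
Your proposal is essentially the paper's argument: extend $\varphi$ to an ambient quasianalytic $\tilde\varphi$, show that the derivatives of $F/\tilde\varphi$ restricted to $X\setminus Z$ decay to zero near $Z$ by combining flatness of $F$ with a \L{}ojasiewicz lower bound on $|\varphi|$, and then feed the resulting field of Taylor series into the l'H\^opital--Hestenes Lemma~\ref{lem:hop}. The two proofs differ only in how the derivative estimates are organized: you run an induction through the Leibniz identity $\tilde\varphi\,\partial^\al(F/\tilde\varphi) = \partial^\al F - \sum_{0<\beta\le\al}\binom{\al}{\beta}\partial^\beta\tilde\varphi\cdot\partial^{\al-\beta}(F/\tilde\varphi)$, while the paper simply notes (by the quotient rule) that each $(F/\tilde\varphi)^{(\al)}$ is a $\cC^\infty$ function flat on $X\cap(\tilde\varphi=0)$ divided by a power of $\tilde\varphi$, so that a single application of the \L{}ojasiewicz estimate gives the decay without any recursion. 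Also, the paper first invokes resolution of singularities to reduce to $X$ smooth before citing \cite[Thm.~6.3\,III]{BMselecta}, while you apply the \L{}ojasiewicz bound on the possibly singular $X$ directly; since that inequality for quasianalytic sets is itself obtained via resolution, the two formulations carry the same content and your route is fine. Both approaches buy the same thing; the paper's quotient-rule observation is a bit shorter, but your version makes the quantitative mechanism more transparent.
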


\begin{proof}
By definition, $\varphi$ is the restriction to $X$ of a function $\Phi$ of class $\cQ$ defined in a neighbourhood
of $X$. Each partial derivative $(F/\Phi)^{(\al)} := (\p^{|\al|}/\p x^{\al})(F/\Phi)$ is the quotient of a $\cC^\infty$
function flat on $X\cap (\Phi = 0)$ by a power of $\Phi$. By Lemma  \ref{lem:hop}, it is enough 
to show that each $(F/\Phi)^{(\al)}|_X$ extends to a continuous function on $X$
which vanishes on $(\varphi =0)$. By resolution of singularities, we can assume that $X$ is smooth;
the result then follows directly from Lojasiewicz's inequality \cite[Thm.\,6.3\,III]{BMselecta};
cf. \cite[Ch.\,IV, Prop.\,1.4]{Malg}.
\end{proof}

\subsection{Proof of the Division Theorem \ref{thm:main}}\label{subsec:main}
Let $f\in \cC^\infty(U)^p$, and assume the equation \eqref{eq:main} admits a formal solution at every
point of $U$; i.e., for all $a\in U$, $\hf_a = \sum_{j=1}^q G_{j,a} \hPhi_{j,a}$, where each
$G_{j,a} \in \cF_a = \IR\llb x\rrb$. It is enough to find a $\cC^\infty$ solution of \eqref{eq:main} locally in $U$;
i.e., to show that, given $a_0 \in U$, there is a neighbourhood $V$ of $a_0$ and $g_1,\ldots,g_q \in \cC^\infty(V)$
such that $f = \sum_{j=1}^q g_j \Phi_j$ in $V$.

We can assume that $V$ admits a filtration \eqref{eq:strat} satisfying the conditions (1)--(5) of Theorem
\ref{thm:strat}. By induction over the filtration, it is enough to assume that $f$ is flat on $X_{k+1}$, for given
$k=0,\ldots,t$, and to show we can find $g_1,\ldots,g_q \in \cC^\infty(V)$ such that $f -  \sum_{j=1}^q g_j \Phi_j$
is flat on $X_k$. 

So let us write $X=X_k$, $Z=X_{k+1}$, and let us drop all subscripts $k$ in the statement of Theorem
\ref{thm:strat}. In other words, we write $\cN = \cN_k$, $\cN(\Rel) = \cN(\Rel)_k$, $A = A_k = \cQ(X,Z)$,
and we write $\Psi_1,\ldots,\Psi_r$ instead of $\Psi_{k1},\ldots,\Psi_{k,r_k}$, and $P_1,\ldots,P_s$
instead of $P_{k1},\ldots,P_{k,s_k}$.

Let $(\al_i,j_i)$, $i=1,\ldots,r$, denote the vertices of $\cN$, so we can assume that $\exp \Psi_i(a,x)
= (\al_i,j_i)$, $i=1,\ldots,r$, for all $a\in X\backslash Z$. By the formal division algorithm (Theorem \ref{thm:formaldiv})
and Corollary \ref{cor:diag} (with $\IR$ as the coefficient ring appearing in these results), 
for each $a\in X\backslash Z$, there is a unique expression
$$
\hf_a(x) = \sum_{i=1}^r \eta_i(a,x) \Psi_i(a,x),
$$
where
$$
\eta_i(a,x) = \sum_{(\al,j)\in \IN^n\times \{1,\ldots,p\}} \eta_{i,(\al,j)}(a) x^{(\al,j)},
$$
and $(\al_i,j_i) + \supp \eta_i(a,x)  \subset \De_i$, $i=1,\ldots,r$
(using the notation of \S\S\ref{subsec:formaldiv},\,\ref{subsec:diag}). 
Note that here we are using the formal division algorithm at each point $a\in X\backslash Z$ (the input for fixed
$a$ consists of formal power with coefficients in $\IR$). Each coefficient $\eta_{i,(\al,j)}$, as a function on
$X\backslash Z$, is the quotient of (the restriction to $X$ of) a $\cC^\infty$ function flat on $Z$ by a product
of powers of the initial coefficients $\Psi_{i,(\al_i,j_i)} \in \cQ(X,Z)$. It follows from the multiplier lemma 
\ref{lem:mult}, that every coefficient $\eta_{i,(\al,j)}$
is the restriction to $X\backslash Z$ of a $\cC^\infty$ function that is flat on $Z$.

By Theorem \ref{thm:strat}(4), re-expressing each $\Psi_i(a,x)$ in terms of $\Phi_{1,a}(x),\ldots,\Phi_{q,a}(x)$, we can write
\begin{equation}\label{eq:formaleq}
\hf_a(x) = \sum_{i=1}^q \zeta_i(a,x) \Phi_{i,a}(x),
\end{equation}
where each
$$
\zeta_i(a,x) = \sum_{(\al,j)\in \IN^n\times \{1,\ldots,p\}} \zeta_{i,(\al,j)}(a) x^{(\al,j)}
$$
and every coefficient $\zeta_{i,(\al,j)}(a)$, $a\in X\backslash Z$, is the restriction to $X\backslash Z$
of a $\cC^\infty$ function that is flat on $Z$. 

Again by the formal division algorithm (applied pointwise, with coefficients in $\IR$, as above), 
after dividing $\zeta(a,x) = (\zeta_1(a,x),\ldots,\allowbreak\zeta_q(a,x))$ by
the module of formal relations $\Rel_a$, $a\in X\backslash Z$, we can assume that, for all $a\in X\backslash Z$, 
$\supp \zeta(a,x)$ lies in the complement of $\cN(\Rel) \subset \IN^n\times\{1,\ldots,q\}$, and we maintain
the condition that each coefficient $\zeta_{i,(\al,j)}(a)$, $a\in X\backslash Z$, is the restriction to $X\backslash Z$
of a $\cC^\infty$ function that is flat on $Z$ (by Lemma \ref{lem:mult}).

Now, $S:=X\backslash Z$ is a $\cC^\infty$ manifold. By the l'H\^opital--Hestenes lemma \ref{lem:hop}, we need 
only show that each $\zeta_i(\xi,x)$, $\xi\in S$, is the field of Taylor expansions on $S$ of a $\cC^\infty$ function
defined in a neighbourhood of $S$. Consider $a\in S$ and $v\in T_a S$. Apply the operator
$\p_{\xi,v} - \p_{x,v}$ to the equation $F(\xi,x) = \sum_{i=1}^q \zeta_i(\xi,x) \Phi_i(\xi,x)$, where
$F(\xi,x) := \hf_\xi(x)$, $\Phi_i(\xi,x) := \Phi_{i,\xi}(x)$ (i.e., to the equation \eqref{eq:formaleq}). Since $f$ and all
$\Phi_i$ are $\cC^\infty$,
\begin{align*}
\left((\p_{\xi,v} - \p_{x,v})F\right)(a,x) &= 0,\\
\left((\p_{\xi,v} - \p_{x,v})\Phi_i\right)(a,x) &= 0,\quad i=1,\ldots,q.
\end{align*}
Therefore,
$$
0 = \sum_{i=1}^q \left((\p_{\xi,v} - \p_{x,v})\zeta_i\right)(a,x)\cdot \Phi_i(a,x);
$$
i.e., $\left((\p_{\xi,v} - \p_{x,v})\zeta\right)(a,x) \in \Rel_a$. But $\supp \left((\p_{\xi,v} - \p_{x,v})\zeta\right)(a,x)
\bigcap \cN(\Rel) = \emptyset$ (see Remark \ref{rem:diagdiff}). Therefore, for all $a\in S$,
$$
\left((\p_{\xi,v} - \p_{x,v})\zeta_i\right)(a,x) = 0,\quad i=1\ldots,q,
$$
and the result follows from Borel's lemma \ref{lem:borel}.\qed

\bibliographystyle{amsplain}

\end{document}